\newtheorem{theorem}{Theorem}
\newtheorem{lemma}{Lemma}
\newtheorem{corollary}[lemma]{Corollary}
\newenvironment{proof}[1][Proof]{\noindent\textbf{#1.} }{\ \rule{0.5em}{0.5em}}
\newcommand{\nc}{\newcommand}
\nc{\cdm}{\textrm{cdm}}
\nc{\card}{\textrm{card}}
\nc{\diam}{\textrm{diam}}
\nc{\dis}{\textrm{dis}}
\nc{\ie}{i.e.\ }
\nc{\eg}{e.g.\ }
\newlength{\malong}
\nc{\nph}[1]{\settowidth{\malong}{#1}\hspace{- \malong}}
\nc{\ph}[1]{\settowidth{\malong}{$#1$}\hspace{\malong}}
\nc{\phanto}[1]{#1}
\begin{document}

\author{Jo\"{e}l Rouyer}
\title{Generic Properties of Compact Metric Spaces}
\maketitle

\begin{abstract}
We prove that there is a residual subset of the Gromov-Hausdorff space (\ie
the space of all compact metric spaces up to isometry endowed with the
Gromov-Hausdorff distance) whose points enjoy several unexpected properties.
In particular, they have zero lower box dimension and infinite upper box dimension.

\end{abstract}

\section{Introduction}

It is rather known that a `generic' continuous real function (or a `typical'
one, or `most' of them) admits a derivative at no point. This is often stated
as a kind of curiosity, for such a function is not so easy to exhibit, or even
to fancy. The aim of this paper is to give some properties of a generic
compact metric space.

When we say generic, we refer to the notion of Baire categories. We recall
that a subset of a topological space $B$ is said to be \emph{rare} if the
interior of its closure is empty. It is said to be \emph{meager}, or \emph{of
first category}, if it is a countable union of rare subsets of $B$. The space
$B$ is called a \emph{Baire space} if each meager subset of $B$ has empty
interior. The Baire's theorem states that any complete metric space is a Baire
space. The complementary of a meager subset of a Baire space is said to be
\emph{residual}. At last, given a Baire space $B$, we say that a
\emph{generic} element of $B$ enjoys a property if the set of elements which
satisfy this property is residual.

In order to state the results of this article we need a few definitions. We
say that a metric space $X$ is \emph{totally anisometric} if two distinct
pairs of points have distinct distances. We say that three points $x,y,z\in X$
are \emph{collinear} if one of the three distances between them equals the sum
of the two others. Of course, this definition matches the classical one in a
Euclidean space. A \emph{perfect set} is a closed set without isolated point.
The definitions of the upper and lower box dimensions are recalled in the
Section \ref{SD}.

We will prove that a generic compact metric space $X$:

\begin{enumerate}
\item \label{PT1}is totally discontinuous (Theorem \ref{T1}).

\item \label{PT2}is totally anisometric (Theorem \ref{T2}).

\item \label{PT3}has no collinear triples of (pairwise distinct) points
(Theorem \ref{T3}).

\item \label{PT6}is perfect (Theorem \ref{T6}).

\item \label{PC6}is homeomorphic to the Cantor set (Corollary \ref{C6}).

\item \label{PT4}admits a set of distance values $\left\{  d\left(
x,y\right)  |x,y\in X\right\}  $ which is homeomorphic to the Cantor set.
(Theorem \ref{T4}).

\item \label{PT7}cannot be embedded in any Hilbert space (Theorem \ref{T7}).

\item \label{PT5a}has zero Hausdorff and lower box dimensions (Theorem
\ref{T5}).

\item \label{PT5b}has infinite upper box dimension (Theorem \ref{T5}).
\end{enumerate}

Earlier studies were performed on a generic compact subset of some fixed (but
without special assumption on it) complete metric space, and on a generic
compact subset of $\mathbb{R}^{n}$. In particular A. V. Kuz'minykh proved that
the points \ref{PT1} and \ref{PT2} hold in this latter case \cite{Kuz}. He
also proves that any $n+1$ points of a generic compact subset of
$\mathbb{R}^{n}$ are always the vertices of a simplex, or in other words, are
never co-hyperplanar. The point \ref{PT3} can be seen as a weaker version of
this result. In the same article, the author proved that the set of distance
values of a generic compact of $\mathbb{R}^{n}$ is totally discontinuous.
Point \ref{PT4} improved his statement in our framework. Concerning the notion
of dimension, P. M. Gruber proved that the point \ref{PT5a} holds for a
generic compact subset of any fixed complete metric space $X$. He also proved
that, if $X$ has some suitable property (\eg$X=\mathbb{R}^{n}$), then a
generic compact subset of $X$ has an upper box dimension greater than or equal
to $n$.

Many other properties are given by these authors and some others
(\cite{Z3}\allowbreak\cite{Wiea}\allowbreak\cite{Blasi}\allowbreak
\cite{Z1}\allowbreak\cite{Myjak2}\allowbreak\cite{Z2}). However those
properties (e.g. to be porous \cite{Z3}, or to look dense \cite{Z2}) involve
the embedding of the generic subset in the whole space, and so, admit no
counterpart in our framework.

\section{\label{S2}The Gromov-Hausdorff space}

The section is devoted to recall the definition and the properties of the
so-called Gromov-Hausdorff space, \ie the space of all isometry classes of
compact metric spaces. We will use the same letter to designate both a metric
space (\ie a set endowed with a distance) and its underlying set. If $X$ is a
metric space, we denote by $d^{X}$ its metric. If $A$ is a part of $X $ and
$\rho$ is a positive number, we denote by $A+\rho$ the $\rho$-neighborhood of
$A$, namely%
\[
A+\rho=\left\{  x\in X|\exists y\in X~d^{X}\left(  X,Y\right)  <\rho\right\}
\text{.}%
\]
If $A=\left\{  a\right\}  $ is a singleton, we denote by $B\left(
a,\rho\right)  =A+\rho$ the open ball. We recall that the Hausdorff distance
$d_{H}^{X}\left(  A,B\right)  $ between two nonempty closed subsets $A$ and
$B$ of $X$ is the infimum of those numbers $\rho$ such that $A\subset B+\rho$
and $B\subset A+\rho$. It is easy to see that $d_{H}^{X}$ is a distance on the
set $\mathfrak{M}\left(  X\right)  $ of all nonempty compact subsets of $X$.
Moreover, if $X$ is complete (resp. compact), then $\mathfrak{M}\left(
X\right)  $ is complete (resp. compact) too \cite[p. 253]{BBI}.

Let $X$ and $Y$ be two compact metric spaces. The Gromov-Hausdorff distance
between them is defined by%
\[
d_{GH}\left(  X,Y\right)  =\inf d_{H}^{Z}\left(  f\left(  X\right)  ,g\left(
Y\right)  \right)  \text{,}%
\]
where the infimum is taken over all metric spaces $Z$ and all isometric
injections $f:X\rightarrow Z$ and $g:Y\rightarrow Z$. It is well-known that
$d_{GH}$ is a distance on the Gromov-Hausdorff space $\mathfrak{M}$ of all
compact metric space up to isometry \cite[p. 259]{BBI}.

A critical point for our purpose is the completeness of $\mathfrak{M}$. S. C.
Ferry considers the fact as `well-known' \cite{Ferry2}\cite{Ferry1}, but the
invoked references (\cite{Grove}\cite{IR1}) actually do not mention
completeness. In \cite[chap. 4]{LNRTree} the author seems unaware of the fact,
for he took the trouble to prove that some particular closed subset of
$\mathfrak{M}$ is complete. Indeed, it appears that the proof is not so easy
to seek in the literature. Since the fact is central in this paper, we chose
to include its proof at the end of the Section.

A \emph{correspondence} $R$ between two metric spaces $X$ and $Y$ is a
relation (\ie a subset of $X\times Y$) such that each element of $X$ is in
relation with at least one element of $Y$, and conversely, each element of $Y
$ is in relation with at least one element in $X$. For $x\in X$ and $y\in Y $,
we write $xRy$ instead of $\left(  x,y\right)  \in R$. The \emph{distortion}
of a correspondence $R$ between $X$ and $Y$ is the number%
\[
\dis\left(  R\right)  =\sup\left\{  \left.  \left\vert d^{X}\left(
x,x^{\prime}\right)  -d^{Y}\left(  y,y^{\prime}\right)  \right\vert
~\right\vert x,x^{\prime}\in X~y,y^{\prime}\in Y~xRy~x^{\prime}Ry^{\prime
}\right\}  \text{.}%
\]
The above notion is useful for it provides another way to compute the
Gromov-Hausdorff distance.

\begin{lemma}
\label{Ldis}\emph{\cite[p. 257]{BBI}}%
\[
d_{GH}\left(  X,Y\right)  =\frac{1}{2}\inf_{R}\dis\left(  R\right)
\]
where the infimum is taken over all correspondences $R$ between $X$ and $Y$.
\end{lemma}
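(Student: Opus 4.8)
The plan is to prove the identity by establishing the two inequalities separately: the ``easy'' bound $\frac{1}{2}\inf_R \dis(R) \le d_{GH}(X,Y)$, obtained by extracting a correspondence from a pair of embeddings, and the ``hard'' bound $d_{GH}(X,Y) \le \frac{1}{2}\inf_R \dis(R)$, obtained by gluing $X$ and $Y$ into a single metric space.

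For the first inequality I would start from an arbitrary metric space $Z$ together with isometric injections $f:X \to Z$ and $g:Y \to Z$, and let $\rho$ be any number exceeding $d_H^Z(f(X),g(Y))$. Since $f(X) \subset g(Y) + \rho$ and conversely, the relation $R = \{(x,y) \in X \times Y : d^Z(f(x),g(y)) \le \rho\}$ meets every $x \in X$ and every $y \in Y$, hence is a correspondence. For two related pairs $(x,y)$ and $(x',y')$, two applications of the triangle inequality in $Z$ give $\left\vert d^X(x,x') - d^Y(y,y') \right\vert = \left\vert d^Z(f(x),f(x')) - d^Z(g(y),g(y')) \right\vert \le d^Z(f(x),g(y)) + d^Z(f(x'),g(y')) \le 2\rho$, so $\dis(R) \le 2\rho$. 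Letting $\rho$ decrease to $d_H^Z(f(X),g(Y))$ and then taking the infimum over all $Z,f,g$ yields the claim.

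For the reverse inequality I would fix a correspondence $R$, put $r = \dis(R)$, and manufacture a single space carrying both $X$ and $Y$. The natural candidate is the disjoint union $Z = X \sqcup Y$, equipped with the function that retains $d^X$ and $d^Y$ on each factor and sets, for $x \in X$ and $y \in Y$,
\[
d(x,y) = \inf\left\{ d^X(x,x') + \frac{r}{2} + d^Y(y',y) ~:~ x' R y' \right\}.
\]
Once this is shown to be a metric, taking $x'=x$ and any $y'$ with $x R y'$ gives $d(x,y') \le r/2$, and symmetrically, so the inclusions $X \subset Y + \rho$ and $Y \subset X + \rho$ hold in $Z$ for every $\rho > r/2$; hence $d_H^Z(X,Y) \le r/2$, whence $d_{GH}(X,Y) \le r/2$, and taking the infimum over $R$ finishes the proof.

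The main obstacle is checking that $d$ really is a metric, and in particular that the formula above never shortens distances inside $X$ or inside $Y$ — that a detour through the other factor cannot beat the direct distance. I would verify the triangle inequality case by case; the delicate case is $x_1, x_2 \in X$ joined through an intermediate point $y \in Y$, where expanding both infima and using $d^Y(b,y) + d^Y(y,b') \ge d^Y(b,b') \ge d^X(a,a') - r$ (the last step being exactly the distortion bound for the related pairs $a R b$ and $a' R b'$) makes the two copies of $r/2$ cancel and leaves $d^X(x_1,a) + d^X(a,a') + d^X(a',x_2) \ge d^X(x_1,x_2)$. The mixed cases with two points in the same factor are immediate, while symmetry and non-negativity are clear; for $r>0$ the bound $d(x,y) \ge r/2$ keeps the two factors genuinely separated, so no passage to a quotient is needed.
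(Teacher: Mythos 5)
The paper offers no proof of this lemma at all---it is quoted directly from \cite[p.~257]{BBI}---and your argument is precisely the standard proof given in that reference: a correspondence $R=\left\{ \left( x,y\right) : d^{Z}\left( f\left( x\right),g\left( y\right)\right) \le\rho\right\}$ of distortion at most $2\rho$ extracted from any pair of embeddings for one inequality, and for the other the glued metric on the disjoint union $X\sqcup Y$ with gap $r/2$, including the correct treatment of the delicate case of the triangle inequality via the distortion bound. The only loose end is the degenerate case $r=\dis\left( R\right) =0$, where your glued $d$ is merely a pseudometric and your remark that ``no passage to a quotient is needed'' does not apply; this is repaired in one line, either by passing to the metric quotient (which contains isometric copies of $X$ and $Y$ at Hausdorff distance $0$) or by running the same construction with $r/2$ replaced by $r/2+\varepsilon$ and letting $\varepsilon\rightarrow 0$.
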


Another useful result is the following

\begin{lemma}
\label{LPS}Let $\left(  X_{n}\right)  _{n\in\mathbb{N}}$ be a converging
sequence of elements of $\mathfrak{M}$, and denote by $Y$ its limit. Let
$\left(  \varepsilon_{n}\right)  _{n\in\mathbb{N}}$ a sequence of positive
numbers. Then, there exist a compact metric space $Z$, an isometric embedding
$g:Y\rightarrow Z$, and for each positive integer $n$, an isometric embedding
$f_{n}:X_{n}\rightarrow Z$, such that $d_{H}^{Z}\left(  f_{n}\left(
X_{n}\right)  ,g\left(  Y\right)  \right)  <d_{GH}\left(  X_{n},Y\right)
+\varepsilon_{n}$.
\end{lemma}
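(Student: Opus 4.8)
The plan is to construct a single "universal" space $Z$ that simultaneously accommodates the limit $Y$ and all the $X_n$, by first building, for each $n$, a good space witnessing the distance $d_{GH}(X_n,Y)$, and then gluing all these spaces together along the common copy of $Y$. Let me sketch this.

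First I would fix, for each $n$, a metric space $Z_n$ together with isometric embeddings $f_n : X_n \to Z_n$ and $g_n : Y \to Z_n$ realizing the Gromov-Hausdorff distance up to the allotted slack, that is, with $d_H^{Z_n}(f_n(X_n), g_n(Y)) < d_{GH}(X_n,Y)+\varepsilon_n$. Such $Z_n$ exist by the very definition of $d_{GH}$, and one may take $Z_n = f_n(X_n)\cup g_n(Y)$, which is compact since it is a finite union of compact sets. The subtlety is that each $Z_n$ carries its own isometric copy $g_n(Y)$ of $Y$, and these copies must be identified.

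The key step is the gluing. I would form the disjoint union $\coprod_n Z_n$ and take the quotient that identifies $g_n(y)$ with $g_m(y)$ for all $n,m$ and all $y\in Y$; this produces a set $Z$ containing a single distinguished copy $g(Y)$ of $Y$, into which every $X_n$ embeds via $f_n$. The metric on $Z$ is defined by taking, between two points lying in different pieces $Z_n$ and $Z_m$, the infimal length of a path that passes through the shared copy of $Y$, namely $d^Z(u,v)=\inf_{y\in Y}\bigl(d^{Z_n}(u,g_n(y))+d^{Z_m}(g_m(y),v)\bigr)$, while on each single $Z_n$ one keeps $d^{Z_n}$. I expect the \textbf{main obstacle} to be verifying that this formula is genuinely a metric—especially the triangle inequality across three different pieces and, more delicately, the fact that it does not collapse distances inside a single $Z_n$ (so that the embeddings $f_n$ and $g$ remain isometric, not merely $1$-Lipschitz). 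One checks this by noting that any cross-piece path from $u$ to $v$ within the same $Z_n$ has length at least $d^{Z_n}(u,v)$ by the triangle inequality in $Z_n$, so the infimum recovers the original distance.

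Finally I would deal with compactness of $Z$. The issue is that $Z$ is an infinite union, so it need not be compact a priori; I would therefore also require $\varepsilon_n\to 0$ if necessary, or more robustly observe that since $X_n\to Y$ in $\mathfrak{M}$, each $f_n(X_n)$ lies within a shrinking Hausdorff neighborhood of the fixed copy $g(Y)$, so $Z$ is totally bounded, and taking its completion (which adds at most limit points, all lying in $g(Y)$) yields a compact $Z$ with the embeddings $g$ and $f_n$ unchanged. The promised inequality $d_H^Z(f_n(X_n),g(Y))<d_{GH}(X_n,Y)+\varepsilon_n$ then holds because the Hausdorff distance computed in $Z$ coincides with the one in $Z_n$: the identifications and the cross-piece metric only add points of $g(Y)$ and never bring $f_n(X_n)$ closer to $g(Y)$ than it already was inside $Z_n$.
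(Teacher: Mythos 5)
Your construction is essentially the paper's own proof: the paper likewise takes near-optimal compact realizations $Z_n=f_n'(X_n)\cup g_n'(Y)$, glues them along the common copy of $Y$ using exactly the same infimum-through-$Y$ cross-piece formula (stated there as a pseudo-distance on $\coprod_n Z_n$, then quotiented), and notes as you do that the identification neither collapses distances within a piece nor changes $d_H^{Z}\left(f_n(X_n),g(Y)\right)$. The only divergence is the final compactness step: you assume $\varepsilon_n\to 0$ (as the paper also does, without loss of generality), prove total boundedness and pass to the completion, whereas the paper proves sequential compactness of $Z$ directly --- in fact $Z$ is already complete, since sequences escaping to pieces of arbitrarily high index accumulate on $g(Y)$, so your completion adds nothing; both arguments are valid.
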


\begin{proof}
First, we can assume without loss of generality that $\varepsilon_{n}$
converges to $0$. For each integer $n$ there exists a compact metric space
$Z_{n}$ and two isometric injections $f_{n}^{\prime}:X_{n}\rightarrow Z_{n}$
and $g_{n}^{\prime}:Y\rightarrow Z_{n}$ such that
\[
d_{H}^{Z_{n}}\left(  f_{n}^{\prime}\left(  X_{n}\right)  ,g_{n}^{\prime
}\left(  Y\right)  \right)  <d_{GH}\left(  X_{n},Y\right)  +\varepsilon
_{n}\text{.}%
\]
We also assume that $Z_{n}$ is minimum for inclusion, that is $Z_{n}%
=f_{n}^{\prime}\left(  X_{n}\right)  \cup g_{n}^{\prime}\left(  Y\right)  $.
Let $Z^{\prime}=\coprod_{n\in\mathbb{N}}Z_{n}$, endowed with the
pseudo-distance%
\begin{align*}
d^{Z^{\prime}}\left(  a,b\right)   &  =d^{Z_{n}}\left(  a,b\right)
\text{,}\hspace{1.5cm}\text{if }a,b\in Z_{n}\\
d^{Z^{\prime}}\left(  a,b\right)   &  =\min_{y\in Y}\left(  d^{Z_{n}}\left(
a,g_{n}^{\prime}\left(  y\right)  \right)  +d^{Z_{m}}\left(  g_{m}^{\prime
}\left(  y\right)  ,b\right)  \right)  \text{,}\\
&  \phantom{d^{Z_{n}}\left( a,b\right)\text{,}}\hspace{1.5cm}\text{if }a\in
Z_{n}~\text{and }b\in Z_{m}~\text{with }m\neq n\text{.}%
\end{align*}
We have call $d^{Z^{\prime}}$ a pseudo-distance, but of course, this should to
be checked. Since the verification is straightforward, it is left to the
reader. Let $Z$ be the quotient of $Z^{\prime}$ by the relation of equivalence
$\sim$%
\[
a\sim b\Longleftrightarrow d^{Z^{\prime}}\left(  a,b\right)  =0\text{,}%
\]
and let $\pi:Z^{\prime}\rightarrow Z$ be the canonical surjection. We define
$f_{n}=\pi\circ f_{n}^{\prime}$ and $g=\pi\circ g_{n}^{\prime}$ ($g$ does not
depends on $n$, since $d\left(  g_{n}\left(  y\right)  ,g_{m}\left(  y\right)
\right)  =0$). It is clear that $f_{n}$ and $g$ are isometric embeddings, and
that
\[
d_{H}^{Z}\left(  f_{n}\left(  X_{n}\right)  ,g\left(  Y\right)  \right)
<d_{GH}\left(  X_{n},Y\right)  +\varepsilon_{n}\text{.}%
\]

It remains to prove that $Z$ is compact. Let $\left(  z_{k}\right)
_{k\in\mathbb{N}}$ be a sequence in $Z$. Either there exists an integer $n$
such that all but a finite number of terms of $\left(  z_{k}\right)
_{k\in\mathbb{N}}$ belong to $\pi\left(  Z_{n}\right)  $, or one can extract
from $\left(  z_{k}\right)  _{k\in\mathbb{N}}$ a subsequence such that
$z_{k}\in Z_{v\left(  k\right)  }$, where $v:\mathbb{N\rightarrow}\mathbb{N}$
is increasing. In the former case, since $Z_{n}$ is compact, we can extract
from $\left(  z_{k}\right)  $ a converging subsequence, and the proof is over.
In the latter case, there exists a sequence $\left(  y_{k}\right)
_{k\in\mathbb{N}}$ of points of $Y$ such that $d^{Z_{v\left(  k\right)  }%
}\left(  z_{k},g_{v\left(  k\right)  }\left(  y_{k}\right)  \right)  \leq
d_{GH}\left(  X_{v\left(  k\right)  },Y\right)  +\varepsilon_{v\left(
k\right)  }$. By extracting a suitable subsequence we may assume that $y_{k}$
is converging to some point $y\in Y$. It follows that%
\begin{align*}
d^{Z}\left(  z_{k},g\left(  y\right)  \right)   &  \leq d^{Z_{v\left(
k\right)  }}\left(  z_{k},g_{v\left(  k\right)  }\left(  y_{k}\right)
\right)  +d^{Z_{v\left(  k\right)  }}\left(  g_{v\left(  k\right)  }\left(
y_{k}\right)  ,g\left(  y\right)  \right) \\
&  \leq d_{GH}\left(  X_{v\left(  k\right)  },Y\right)  +\varepsilon_{v\left(
k\right)  }+d^{Y}\left(  y_{k},y\right)  \rightarrow0\text{.}%
\end{align*}
whence $\left(  z_{k}\right)  _{k\in\mathbb{N}}$ is converging. Hence $Z$ is compact.
\end{proof}

\bigskip

For $X\in\mathfrak{M}$ and $\varepsilon>0$, we denote by
\[
N\left(  X,\varepsilon\right)  =\min\left\{  \card\left(  F\right)  |F\subset
X~\forall x\in X~d\left(  x,F\right)  \leq\varepsilon\right\}  \text{,}%
\]
the minimum number of closed balls of radius $\varepsilon$ which are required
to cover $X$. It is easy to see that, for a given space $X$, the function
$N\left(  X,\bullet\right)  $ is non increasing and left-continuous. A subset
$P\subset\mathfrak{M}$ is said to be \emph{uniformly totally bounded} if on
the one hand the set of diameters of elements of $P$ is bounded, and on the
other hand, for each number $\varepsilon>0$, $\sup_{X\in P}N\left(
X,\varepsilon\right)  <\infty$. The Gromov's compactness theorem states that
every uniformly totally bounded subset of $\mathfrak{M}$ is relatively compact
\cite[p. 263]{BBI}. The completeness of $\mathfrak{M}$ follows from Gromov's theorem.

\addtocounter{theorem}{-1}

\begin{theorem}
$\mathfrak{M}$ is complete.
\end{theorem}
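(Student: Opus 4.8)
The plan is to deduce completeness from Gromov's compactness theorem, via the standard principle that a Cauchy sequence which admits a convergent subsequence is itself convergent. So I would start from an arbitrary Cauchy sequence $\left(X_n\right)_{n\in\mathbb{N}}$ in $\mathfrak{M}$, and reduce the whole problem to exhibiting a convergent subsequence. To apply Gromov's theorem, the key is to show that the family $\left\{X_n\mid n\in\mathbb{N}\right\}$ is uniformly totally bounded.

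The boundedness of the diameters is the easy half: from Lemma \ref{Ldis} one gets at once the estimate $\left\vert\diam X-\diam Y\right\vert\leq 2d_{GH}\left(X,Y\right)$ (compare points realizing the diameter through a correspondence of small distortion), and the Cauchy property then bounds $\diam X_n$ uniformly. The substantial half is the uniform covering bound, and this is the step I expect to be the main obstacle. Here I would prove the comparison inequality: if $d_{GH}\left(X,Y\right)<\delta$, then $N\left(X,\varepsilon\right)\leq N\left(Y,\varepsilon-2\delta\right)$ for every $\varepsilon>2\delta$. The argument is to take, via Lemma \ref{Ldis}, a correspondence $R$ with $\dis\left(R\right)<2\delta$, choose a minimal $\left(\varepsilon-2\delta\right)$-net in $Y$, and transport it through $R$ to a subset of $X$; the distortion bound guarantees that each covering radius increases by at most $2\delta$, so the transported set is an $\varepsilon$-net of $X$. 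This is where the notion of distortion does the real work.

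Granting the comparison inequality, the uniform total boundedness follows by a routine splitting argument. Fixing $\varepsilon>0$, the Cauchy property supplies an index $N$ with $d_{GH}\left(X_n,X_N\right)<\varepsilon/4$ for all $n\geq N$, whence $N\left(X_n,\varepsilon\right)\leq N\left(X_N,\varepsilon/2\right)<\infty$ for those $n$; the finitely many remaining terms $X_1,\dots,X_{N-1}$ each have a finite covering number, so $\sup_n N\left(X_n,\varepsilon\right)<\infty$. Thus $\left\{X_n\right\}$ is uniformly totally bounded, and Gromov's theorem makes it relatively compact, yielding a subsequence $X_{n_k}$ converging to some $Y\in\mathfrak{M}$.

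Finally I would close the argument with the general metric fact: since $\left(X_n\right)$ is Cauchy and $X_{n_k}\to Y$, a triangle-inequality estimate $d_{GH}\left(X_n,Y\right)\leq d_{GH}\left(X_n,X_{n_k}\right)+d_{GH}\left(X_{n_k},Y\right)$ shows that the full sequence converges to $Y$. Hence every Cauchy sequence in $\mathfrak{M}$ converges, and $\mathfrak{M}$ is complete. Apart from the covering estimate, every step is bookkeeping.
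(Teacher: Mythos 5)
Your proposal is correct and follows essentially the same route as the paper: bound the diameters, prove a covering-number comparison inequality between Gromov--Hausdorff-close spaces, deduce uniform total boundedness from the Cauchy property, and conclude via Gromov's compactness theorem together with the fact that a Cauchy sequence possessing a convergent subsequence converges. The only cosmetic difference is in the comparison inequality: you transport a net through a correspondence of small distortion (Lemma \ref{Ldis}), while the paper embeds both spaces in a common ambient space and removes an auxiliary $\eta$ using the left-continuity of $N\left(X,\bullet\right)$; both arguments yield the same estimate.
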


\begin{proof}
Let $\left(  X_{n}\right)  _{n\in\mathbb{N}}$ be Cauchy sequence of compact
metric spaces. Since the diameter function $\diam:\mathfrak{M}\rightarrow
\mathbb{R}$ is Lipschitz continuous, $\left(  \diam\left(  X_{n}\right)
\right)  _{n\in\mathbb{N}}$ is Cauchy, and so is bounded.

We claim that for $X,Y\in\mathfrak{M}$ and $\rho>0$ we have%
\[
N\left(  X,\rho+2d_{GH}\left(  X,Y\right)  \right)  \leq N\left(
Y,\rho\right)  \text{.}%
\]
Put $\varepsilon=d_{GH}\left(  X,Y\right)  $ and choose $\eta>0$. Assume that
$X$ and $Y$ are embedded in a space $Z$ such that $d_{H}^{Z}\left(
X,Y\right)  <\varepsilon+\eta$. Let $A\subset Y$ be such that $Y\subset
A+\left(  \rho+\eta\right)  $ and $\card\left(  A\right)  =N\left(
Y,\rho\right)  $. Let $B\subset X$ be a finite set such that $\card\left(
A\right)  \geq\card\left(  B\right)  $ and each point of $B$ is closer to some
point of $A$ than $\varepsilon+\eta$.
\begin{align*}
X  &  \mathbb{\subset}Y+\left(  \varepsilon+\eta\right) \\
&  \subset\left(  A+\left(  \rho+\eta\right)  \right)  +\left(  \varepsilon
+\eta\right) \\
&  \subset A+\left(  \rho+\varepsilon+2\eta\right) \\
&  \subset\left(  B+\varepsilon+\eta\right)  +\left(  \rho+\varepsilon
+2\eta\right) \\
&  \subset B+\left(  \rho+2\varepsilon+3\eta\right)  \text{.}%
\end{align*}
Whence $N\left(  X,\rho+2\varepsilon+3\eta\right)  \leq\card\left(  B\right)
\leq N\left(  Y,\rho\right)  $. Since the result holds for each $\eta>0$ and
$N\left(  X,\bullet\right)  $ is lower semicontinuous, the claim is proved.

Fix $\rho>0$. Since $\left(  X_{n}\right)  $ is Cauchy, there exists a number
$K$ such that for all $n>K$, $d_{GH}\left(  X_{K},X_{n}\right)  <\frac{\rho
}{3}$. It follows that for all $n>K$, $N\left(  X_{n},\rho\right)  \leq
N\left(  X_{n},\frac{\rho}{3}+2d_{GH}\left(  X_{K},X_{n}\right)  \right)  \leq
N\left(  X_{K},\frac{\rho}{3}\right)  $. Hence
\[
\sup_{n}N\left(  X_{n},\rho\right)  \leq\max\left(  \max_{n=1,...,K}N\left(
X_{n},\rho\right)  ,N\left(  X_{K},\frac{\rho}{3}\right)  \right)
<\infty\text{.}%
\]
In other words, the set of $\left\{  X_{n}|n\in\mathbb{N}\right\}  $ is
uniformly totally bounded, and so, by Gromov's compactness theorem is
relatively compact. Hence, we can extract from $\left(  X_{n}\right)  $ a
converging subsequence. Moreover $\left(  X_{n}\right)  $ is Cauchy, and thus converges.
\end{proof}

\section{Finite spaces}

The subset of $\mathfrak{M}_{F}\subset\mathfrak{M}$ of finite metric spaces is
playing a key role in the study of $\mathfrak{M}$ because it is dense in
$\mathfrak{M}$ and simple enough to be described by mean of matrices.

We define the \emph{codiameter} of finite metric space $X$ as the minimum of
all non zero distances in $X$: $\cdm\left(  X\right)  =\min_{x\neq y\in
X}d^{X}\left(  x,y\right)  $.

A \emph{distance matrix} is symmetric matrix $D=\left(  d_{ij}\right)
_{\substack{1\leq i\leq n\\1\leq j\leq n}}$with $0$'s on the diagonal, and
positive numbers elsewhere, such that for all indices $i,j,k$ we have
$d_{ij}\leq d_{ik}+d_{kj}$. We say that two distance matrices are equivalent,
if we can pass from one to the other by applying the same permutation
simultaneously to its rows and its columns.

We clearly can associate to a distance matrix $D$ of order $n$ the metric
space $X_{D}$ defined by
\begin{align*}
X_{D} &  =\left\{  1,...,n\right\}  \\
d^{X_{D}}\left(  i,j\right)   &  =d_{ij}\text{.}%
\end{align*}
The spaces $X_{D}$ and $X_{D^{\prime}}$ are isometric if and only if $D$ and
$D^{\prime}$ are equivalent. Conversely, given a finite metric space
$X=\left\{  x_{1},...,x_{n}\right\}  $, we can associate to it the distance
matrix, $D=\left(  d^{X}\left(  x_{i},x_{j}\right)  \right)  _{\substack{1\leq
i\leq n\\1\leq j\leq n}}$. Of course $D$ depends on the order in which the
points of $X$ are labeled, but not its class of equivalence. Moreover, two
isometric spaces $X$ and $X^{\prime}$ give the same class of equivalence of
distance matrices. Hence the set $\mathfrak{M}_{n}\subset\mathfrak{M}$ of
those metric spaces with cardinality $n$ is bijectively mapped on the set of
equivalence classes of distance matrices of order $n$. Furthermore the
inequality%
\begin{equation}
d_{GH}\left(  X_{D},X_{D^{\prime}}\right)  \leq\frac{1}{2}\max_{i,j}\left\vert
d_{ij}-d_{ij}^{\prime}\right\vert =\frac{1}{2}\left\Vert D-D^{\prime
}\right\Vert _{\infty}\label{1}%
\end{equation}
follows from Lemma \ref{Ldis}.

Conversely, let $X$ and $Y$ be two finite metric spaces with $n$ elements,
such that $d_{GH}\left(  X,Y\right)  <\frac{1}{2}\cdm\left(  Y\right)  $. Let
$\theta$ be a real number such that $d_{GH}\left(  X,Y\right)  <\theta
<\frac{1}{2}\cdm\left(  Y\right)  $. By definition of $d_{GH}$, there exist a
metric space $Z$, and two subsets $X^{\prime}=\left\{  x_{1},...,x_{n}%
\right\}  $ and $Y^{\prime}=\left\{  y_{1},...,y_{n}\right\}  $ of $Z$, such
that $X$ is isometric to $X^{\prime}$, $Y$ is isometric to $Y^{\prime}$ and
$d_{H}^{Z}\left(  X^{\prime},Y^{\prime}\right)  <\theta$. It follows that for
each $i\in\left\{  1,...,n\right\}  $, there exits $j\in\left\{
1,...,n\right\}  $ such that $d^{Z}\left(  x_{i},y_{j}\right)  <\theta$.
Moreover $j$ is unique: assume on the contrary that two such indices $j_{1}$
and $j_{2}$ exist, then
\[
2\theta<\cdm\left(  Y\right)  \leq d^{Y}\left(  y_{j_{1}},y_{j_{2}}\right)
\leq d^{Z}\left(  y_{j_{1}},x_{i}\right)  +d^{Z}\left(  x_{i},y_{j_{2}%
}\right)  <2\theta\text{.}%
\]
Hence, by changing the labeling of elements of $Y$, we can assume that
$d^{Z}\left(  x_{i},y_{i}\right)  $ is less than $\theta$ for all indices $i$.
Let $D_{X}$ and $D_{Y}$ be the distance matrices of $X$ and $Y$ corresponding
to this order, then
\[
\frac{1}{2}\left\Vert D_{X}-D_{Y}\right\Vert _{\infty}\leq\min_{i,j}\frac
{1}{2}\left(  d^{Z}\left(  x_{i},y_{i}\right)  +d^{Z}\left(  x_{j}%
,y_{j}\right)  \right)  <\theta\text{.}%
\]
Since this holds for all $\theta$ greater than $d_{GH}\left(  X_{D}%
,X_{D^{\prime}}\right)  $, it follows that
\[
\frac{1}{2}\left\Vert D_{X}-D_{Y}\right\Vert _{\infty}\leq d_{GH}\left(
X,Y\right)  \text{.}%
\]
This and (\ref{1}) prove that, if $d_{GH}\left(  X_{D},X_{D^{\prime}}\right)
\leq\frac{1}{2}\cdm\left(  X_{D}\right)  $, then%
\begin{equation}
d_{GH}\left(  X_{D},X_{D^{\prime}}\right)  =\frac{1}{2}\min_{D^{\prime\prime
}\sim D^{\prime}}\left\Vert D-D^{^{\prime\prime}}\right\Vert _{\infty}\text{.}
\label{2}%
\end{equation}
In other words, the bijection between $\mathfrak{M}_{n}$ and the set of
equivalence classes of distance matrix of order $n$ is a local similitude. We
will use this fact to prove the

\begin{lemma}
Let $X\in\mathfrak{M}_{n}$ be a finite metric space with cardinality $n$ and
$\varepsilon$ be a positive number. There exits a ball $B_{0}\subset B\left(
X,\varepsilon\right)  \subset\mathfrak{M}_{n}$, each point of which is totally
anisometric and without triples of collinear points.
\end{lemma}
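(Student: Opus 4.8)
The plan is to push the whole question into the space of distance matrices, where both forbidden phenomena are cut out by finitely many affine hyperplanes, and then to use the local similitude recorded in~(\ref{2}) to turn a box of ``good'' matrices into a genuine ball $B_{0}\subset\mathfrak{M}_{n}$. First I would fix a distance matrix $D$ representing $X$ and regard distance matrices of order $n$ as points of $\mathbb{R}^{\binom{n}{2}}$ (the free off-diagonal entries) equipped with $\left\Vert \cdot\right\Vert _{\infty}$. A space $X_{D'}$ fails to be totally anisometric exactly when $d_{ij}=d_{kl}$ for two distinct pairs $\{i,j\}\neq\{k,l\}$, and it has a collinear triple exactly when one triangle inequality degenerates, say $d_{ij}=d_{ik}+d_{kj}$. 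Each condition is an affine hyperplane, so the union of the anisometry hyperplanes is closed and nowhere dense, while the collinearity hyperplanes are precisely the boundary of the open region $\mathring{\mathcal{D}}_{n}$ of matrices with all triangle inequalities strict. Consequently every matrix lying in $\mathring{\mathcal{D}}_{n}$ and off the anisometry hyperplanes yields a space that is simultaneously totally anisometric and free of collinear triples.

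Next I would manufacture a good centre $D_{0}$ near $D$. Adding a small positive constant to each off-diagonal entry preserves positivity and makes every triangle inequality strict, so $\mathring{\mathcal{D}}_{n}$ is dense in the set of distance matrices; hence the open ball $\{D':\left\Vert D'-D\right\Vert _{\infty}<2\varepsilon\}$ meets $\mathring{\mathcal{D}}_{n}$, and since the anisometry hyperplanes are nowhere dense it meets the complement of their union as well. Pick $D_{0}$ there. Because $\mathring{\mathcal{D}}_{n}$ is open and $D_{0}$ lies on none of the finitely many closed anisometry hyperplanes, there is a radius $\delta>0$ for which the whole box $V=\{D':\left\Vert D'-D_{0}\right\Vert _{\infty}<\delta\}$ sits inside $\mathring{\mathcal{D}}_{n}$ and avoids all those hyperplanes; every matrix in $V$ then gives a totally anisometric space without collinear triples. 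By~(\ref{1}) one also has $d_{GH}(X,X_{D_{0}})\leq\tfrac{1}{2}\left\Vert D-D_{0}\right\Vert _{\infty}<\varepsilon$, so $X_{D_{0}}\in B(X,\varepsilon)$.

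Finally I would take $B_{0}=B(X_{D_{0}},r)\subset\mathfrak{M}_{n}$ with $r$ chosen smaller than each of $\delta/2$, $\tfrac{1}{2}\cdm(X_{D_{0}})$, and $\varepsilon-d_{GH}(X,X_{D_{0}})$. The last bound forces $B_{0}\subset B(X,\varepsilon)$ by the triangle inequality. For the substantive point, given $X'\in B_{0}$ the inequality $r\leq\tfrac{1}{2}\cdm(X_{D_{0}})$ permits the use of~(\ref{2}): there is a distance matrix $D''$ representing $X'$ with $\tfrac{1}{2}\left\Vert D_{0}-D''\right\Vert _{\infty}=d_{GH}(X_{D_{0}},X')<r<\delta/2$, so $D''\in V$ and $X'$ enjoys both properties.

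The step I expect to be the real obstacle — and the heart of the argument — is this last transfer, because (\ref{2}) only controls $X'$ up to a relabelling of its points, via the minimum over the permutation class $D''\sim D'$. This does no harm precisely because total anisometry and the absence of collinear triples are intrinsic, hence invariant under the simultaneous row/column permutations relating equivalent distance matrices; it therefore suffices that \emph{some} labelling $D''$ of $X'$ fall into $V$, which is exactly what the minimizing labelling provides. The ancillary facts (density of $\mathring{\mathcal{D}}_{n}$ and positivity of the distances from $D_{0}$ to the finitely many hyperplanes) are elementary and I would only sketch them.
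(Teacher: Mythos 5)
Your proposal is correct and takes essentially the same route as the paper's own proof: both pass to the space of distance matrices, observe that failures of total anisometry and collinearity are confined to a finite (hence nowhere dense) union of hyperplanes, find a ball of good matrices near $D$, and transfer it back to a ball in $\mathfrak{M}_{n}$ via the local similitude~(\ref{2}), with the permutation-invariance of the two properties handling the relabelling issue. The only differences are cosmetic: you build the centre $D_{0}$ by an explicit perturbation argument, whereas the paper gets its good ball directly from the nonempty interior of $\mathcal{D}_{n}$ together with the rareness of $R_{1}\cup R_{2}$.
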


\begin{proof}
Put $m=\frac{n\left(  n-1\right)  }{2}$. Let $D$ be a distance matrix
associated to $X$. Since the space $\mathcal{D}_{n}$ of distance matrices of
order $n$ is defined by a finite number of linear inequalities ($d_{ij}>0$,
$d_{ij}+d_{jk}\geq d_{ik}$) it is a convex polytope of the set $\mathcal{S}%
_{n}$ of all symmetric matrices with zero on the diagonal, which in turn is
isomorphic to $\mathbb{R}^{m}$ as a vector space. Moreover the distance matrix
with $0$ on the diagonal and $1$ elsewhere clearly belongs to the interior of
$\mathcal{D}_{n}$, that is therefore nonempty. Let $R_{1}$ be the union of the
$\frac{\left(  m-1\right)  m}{2}$ hyperplanes of $\mathcal{S}_{n}$ defined by
the equations $d_{ij}=d_{kl}$ ($1\leq i<j\leq n$, $1\leq k<l\leq n$, $\left(
i,j\right)  <\left(  k,l\right)  $) and $R_{2}$ the union of the $\left(
n-2\right)  m$ hyperplane defined by the equations $d_{ij}+d_{jk}=d_{ik}$
($1\leq i<k\leq n$, $1\leq j\leq n$, $j\neq k$, $j\neq i$). The matrix of a
metric space of cardinality $n$ which is not totally anisometric (resp. admits
a triple of collinear points) should belong to $R_{1}$ (resp. $R_{2}$). Since
$R_{1}\cup R_{2}$ is clearly rare, there exists a ball $B_{1}=B\left(
\Delta,2\eta\right)  $ included in $B\left(  D,\varepsilon\right)  \cap\left(
\mathcal{D}_{n}\backslash\left(  R_{1}\cup R_{2}\right)  \right)  $. Assume
moreover that $\eta<\frac{1}{2}\cdm\left(  X_{\Delta}\right)  $. Let $Y\in
B_{0}\overset{def}{=}B\left(  X_{\Delta},\eta\right)  \subset\mathfrak{M}_{n}%
$. By (\ref{2}), there exists a distance matrix $D_{Y}$ associated to $Y$ such
that $\left\Vert \Delta-D_{Y}\right\Vert =2d_{GH}\left(  X_{\Delta},Y\right)
<2\eta$. Whence $D_{Y}\in B_{1}$ and $Y$ is totally anisometric and without
collinear points.
\end{proof}

\begin{corollary}
\label{C1}Totally anisometric spaces without collinear points are dense in
$\mathfrak{M}$.
\end{corollary}

\section{Basic properties of generic compact metric spaces}

\begin{theorem}
\label{T1}A generic compact metric space is totally discontinuous.
\end{theorem}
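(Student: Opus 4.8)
The plan is to exhibit the set of totally disconnected compact metric spaces as a countable intersection of open dense subsets of $\mathfrak{M}$, so that Baire's theorem applies via the completeness of $\mathfrak{M}$ established in Section \ref{S2}. The starting point is a characterization valid \emph{for compact} metric spaces: $X$ is totally disconnected (its connected components are singletons) if and only if it is zero-dimensional, which in turn holds if and only if for every $\varepsilon>0$ the space $X$ admits a finite partition into clopen pieces of diameter less than $\varepsilon$. Since two disjoint compact sets lie at positive distance, such a partition is the same thing as a finite partition $X=C_{1}\sqcup\dots\sqcup C_{m}$ with $\diam\left(C_{i}\right)<\varepsilon$ for each $i$ and $\min_{i\neq j}d^{X}\left(C_{i},C_{j}\right)>0$ (the positive gap makes each $C_{i}$ clopen, and conversely). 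Accordingly I would set, for each integer $n\geq 1$,
\[
U_{n}=\left\{X\in\mathfrak{M}\mid X\text{ admits a finite clopen partition into pieces of diameter}<1/n\right\},
\]
so that $\bigcap_{n}U_{n}$ is exactly the set of totally disconnected compact metric spaces. It then suffices to show that each $U_{n}$ is open and dense.

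Density is the easy half. A finite metric space is partitioned into its singletons, which are clopen of diameter $0<1/n$ and pairwise at positive distance, so $\mathfrak{M}_{F}\subset U_{n}$ for every $n$. Since finite spaces are dense in $\mathfrak{M}$, each $U_{n}$ is dense.

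The openness of $U_{n}$ is the heart of the argument and the step I expect to be the main obstacle, since one must propagate the clopen partition through a Gromov-Hausdorff perturbation. Suppose $X=C_{1}\sqcup\dots\sqcup C_{m}\in U_{n}$, and put $\delta=\min_{i\neq j}d^{X}\left(C_{i},C_{j}\right)>0$ and $\beta=1/n-\max_{i}\diam\left(C_{i}\right)>0$. Fix $\sigma$ with $0<\sigma<\tfrac12\min\left(\delta,\beta\right)$, take any $Y$ with $d_{GH}\left(X,Y\right)<\sigma$, and choose by Lemma \ref{Ldis} a correspondence $R$ between $X$ and $Y$ with $\dis\left(R\right)<2\sigma$. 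Define $C_{i}'=\left\{y\in Y\mid yRx\text{ for some }x\in C_{i}\right\}$. I would then verify three points. First, the $C_{i}'$ cover $Y$, because every $y\in Y$ is related to some $x\in X$, which lies in some $C_{i}$. Second, they are pairwise disjoint: a point related to $x\in C_{i}$ and $x'\in C_{j}$ with $i\neq j$ would force $d^{X}\left(x,x'\right)\leq\dis\left(R\right)<\delta$, contradicting $d^{X}\left(x,x'\right)\geq\delta$. Third, they are small and well separated, since for $y,y'\in C_{i}'$ related to $x,x'\in C_{i}$ one has $d^{Y}\left(y,y'\right)\leq\diam\left(C_{i}\right)+\dis\left(R\right)<1/n$, while a symmetric estimate gives $\min_{i\neq j}d^{Y}\left(C_{i}',C_{j}'\right)\geq\delta-\dis\left(R\right)>0$. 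Thus $\left(C_{i}'\right)_{i}$ is a clopen partition of $Y$ into pieces of diameter less than $1/n$, so $Y\in U_{n}$; hence $B\left(X,\sigma\right)\subset U_{n}$ and $U_{n}$ is open.

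Finally, each $U_{n}$ being open and dense in the complete metric space $\mathfrak{M}$, the intersection $\bigcap_{n}U_{n}$ is residual by Baire's theorem. As this intersection coincides with the set of totally disconnected compact metric spaces, a generic compact metric space is totally disconnected, which proves the theorem.
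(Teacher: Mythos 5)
Your proof is correct, and it takes a genuinely different route from the paper's. The paper argues on the complement: it defines $P_{n}$ as the set of spaces having a connected component of diameter at least $\frac{1}{n}$, notes $P_{n}$ has empty interior by density of $\mathfrak{M}_{F}$, and proves $P_{n}$ is \emph{closed} by a sequential argument that needs Lemma \ref{LPS} to embed the whole convergent sequence into a single compact space $Z$, compactness of the hyperspace $\mathfrak{M}\left(Z\right)$ to extract a limit of the large components, continuity of the diameter, and Michael's theorem that connected compact subsets form a closed subset of $\mathfrak{M}\left(Z\right)$. You instead work with the good sets $U_{n}$ and prove they are \emph{open} by a bare-hands distortion estimate using only Lemma \ref{Ldis}; your density step is the same as the paper's (finite spaces). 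This buys you independence from Lemma \ref{LPS}, from the hyperspace machinery, and from Michael's theorem, but the cost is hidden in your opening characterization: the equivalence between total disconnectedness and the existence of finite clopen partitions at every scale for compact metric spaces rests on the nontrivial point-set fact that in compact Hausdorff spaces components coincide with quasi-components (it fails without compactness -- there are totally disconnected complete metric spaces of positive dimension). That fact plays exactly the role that Michael's theorem plays in the paper, so both proofs outsource one piece of classical topology; indeed, by that same fact your sets satisfy $U_{n}=\mathfrak{M}\setminus P_{n}$, so the two arguments establish the same decomposition from opposite sides, yours via openness of the good set, the paper's via closedness of the bad set. One further difference worth noting: the paper's sequential closedness scheme is the template it reuses almost verbatim in Theorems \ref{T2}, \ref{T3}, \ref{T4} and \ref{T6}, whereas your correspondence-pushforward of a clopen partition is specific to this theorem.
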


\begin{proof}
Let $P_{n}\subset\mathfrak{M}$ be the set of compact metric spaces admitting a
connected component of diameter at least $\frac{1}{n}$. Since $\mathfrak{M}%
_{F}$ is dense in $\mathfrak{M}$, $P_{n}$ has empty interior. The union
$\bigcup_{n\in\mathbb{N}}P_{n}$ is the complementary of the set of totally
discontinuous compact metric spaces, thus we only need to prove that $P_{n}$
is closed. Let $\left(  X_{k}\right)  _{k\in\mathbb{N}}$ be a sequence of
elements of $P_{n}$, converging to $X\in\mathfrak{M}$. Let $C_{k}\subset
X_{k}$ be a closed connected subset whose diameter is at least $\frac{1}{n}$.
By Lemma \ref{LPS}, we can assume without loss of generality that all $X_{k}$
and $X$ are subsets of a compact metric space $Z$ and that $d_{H}^{Z}\left(
X_{k},X\right)  \leq d_{GH}\left(  X_{k},X\right)  +\frac{1}{k}$. Since
$\mathfrak{M}\left(  Z\right)  $ is compact, we can extract from $\left(
C_{k}\right)  $ a converging subsequence. Let $C$ be its limit, it is easy to
see that $C\subset X$. Since the diameter function is continuous, it is clear
that $\diam\left(  C\right)  =\lim\diam\left(  C_{k}\right)  \geq\frac{1}{n}$.
Moreover, it is a well-known fact that the set of connected compact subsets of
$Z$ is closed in $\mathfrak{M}\left(  Z\right)  $ \cite{Michael}. Hence $C$ is
connected and $X$ belongs to $P_{n}$. Thus $P_{n}$ is closed.
\end{proof}

\begin{theorem}
\label{T2}A generic compact metric space is totally anisometric.
\end{theorem}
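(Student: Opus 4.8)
The plan is to show that the set of totally anisometric compact metric spaces is residual, which by the Baire property of $\mathfrak{M}$ amounts to writing its complement as a countable union of rare sets. A space $X$ fails to be totally anisometric precisely when two distinct pairs of points share a common distance, \ie there exist points $x,x',y,y'\in X$ with $\{x,x'\}\neq\{y,y'\}$ but $d^{X}(x,x')=d^{X}(y,y')$. To turn this into a closed condition I would quantify the coincidence away from degeneracy: for positive integers $n$, let $Q_{n}\subset\mathfrak{M}$ be the set of spaces $X$ containing four points $x,x',y,y'$ with $d^{X}(x,x')=d^{X}(y,y')$, with all four mutual separations of the two pairs at least $\tfrac{1}{n}$ (so the two pairs are genuinely distinct and nondegenerate), and with the diameter bounded above by $n$. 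The complement of the totally anisometric spaces is then $\bigcup_{n}Q_{n}$, so it suffices to prove each $Q_{n}$ is closed with empty interior.

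Empty interior is immediate from Corollary \ref{C1}: totally anisometric spaces without collinear points are dense in $\mathfrak{M}$, and such spaces lie outside every $Q_{n}$, so no ball can be contained in $Q_{n}$. Hence $Q_{n}$ has empty interior, and the work is in showing $Q_{n}$ is closed. I would mimic the argument of Theorem \ref{T1}. Take a sequence $(X_{k})$ in $Q_{n}$ converging to $X$, and use Lemma \ref{LPS} to realize all the $X_{k}$ and $X$ as subspaces of a single compact metric space $Z$ with $d_{H}^{Z}(X_{k},X)\leq d_{GH}(X_{k},X)+\tfrac{1}{k}\to 0$. Each $X_{k}$ carries witnessing points $x_{k},x_{k}',y_{k},y_{k}'$ with $d^{Z}(x_{k},x_{k}')=d^{Z}(y_{k},y_{k}')$ and the separation and diameter constraints of $Q_{n}$. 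By compactness of $Z$ I can pass to a subsequence along which all four sequences converge to limits $x,x',y,y'\in Z$.

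It remains to verify that the limit configuration still witnesses membership in $Q_{n}$. The four limit points lie in $X$ because $d_{H}^{Z}(X_{k},X)\to 0$ forces each limit into the closed set $X$. Continuity of the distance on $Z$ gives $d^{X}(x,x')=\lim d^{Z}(x_{k},x_{k}')=\lim d^{Z}(y_{k},y_{k}')=d^{X}(y,y')$, so the coincidence of distances passes to the limit. The crucial point—and the one place the $\tfrac{1}{n}$-separation was inserted—is that the limiting pairs remain genuinely distinct: each of the pairwise separations between $\{x,x'\}$ and $\{y,y'\}$ is a limit of quantities bounded below by $\tfrac{1}{n}$, hence is itself at least $\tfrac{1}{n}>0$, so no two of the relevant points collapse and $\{x,x'\}\neq\{y,y'\}$. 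Thus $X\in Q_{n}$ and $Q_{n}$ is closed.

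The main obstacle I anticipate is precisely this nondegeneracy in the limit: without the built-in lower bound on separations, the witnessing pairs could degenerate (two points colliding, or the two pairs merging) so that the limit space need not actually fail total anisometry. Stratifying the complement by the parameter $n$ that simultaneously bounds the diameter above and the separations below is what repairs this, and once that bookkeeping is set up the closedness follows the template of Theorem \ref{T1} verbatim. I expect the rest to be routine.
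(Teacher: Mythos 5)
There is a genuine gap in your decomposition of the complement. By requiring all four mutual separations between the two pairs $\{x,x'\}$ and $\{y,y'\}$ to be at least $\tfrac{1}{n}$, you force the witnessing pairs to be \emph{disjoint} as sets. But total anisometry also forbids equal distances between two distinct pairs that \emph{share} a point: the three-point space $\{a,b,c\}$ with $d(a,b)=d(a,c)=1$ and $d(b,c)=\tfrac{3}{2}$ is not totally anisometric, yet its only equal-distance pairs $\{a,b\}$ and $\{a,c\}$ have the point $a$ in common, so this space lies in none of your sets $Q_{n}$ (any two pairs drawn from a three-point set intersect). Hence $\bigcup_{n}Q_{n}$ is a \emph{proper} subset of the complement of the totally anisometric spaces, and proving each $Q_{n}$ rare only shows that a generic space has no two \emph{disjoint} equal-distance pairs --- strictly weaker than the theorem. (On a literal reading there is a second defect: your conditions never force $x\neq x'$, so the degenerate choice $x=x'$, $y=y'$ would place even two-point spaces, which \emph{are} totally anisometric, inside $Q_{n}$, ruining the empty-interior step; presumably you also intended $d(x,x')=d(y,y')\geq\tfrac{1}{n}$.)

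The repair is to weaken the individual lower bounds to lower bounds on \emph{sums}, which is exactly what the paper does: require $d(x,x')=d(y,y')\geq\tfrac{1}{n}$ together with $d(x,y)+d(x',y')\geq\tfrac{1}{n}$ and $d(x,y')+d(x',y)\geq\tfrac{1}{n}$. These two sums are precisely the quantitative negation of the set equality $\{x,x'\}=\{y,y'\}$; they allow the pairs to share one point, so the union over $n$ now exhausts the whole complement, and being continuous in the four points they pass to the limit just as well as your stronger conditions --- the limiting pairs stay distinct even if some individual cross-distance tends to $0$. With that change your argument goes through: the empty-interior step via Corollary \ref{C1} is fine, and your closedness proof (realizing all spaces inside one compact $Z$ by Lemma \ref{LPS} and extracting convergent witnesses, as in Theorem \ref{T1}) is a valid alternative to the paper's route, which instead takes correspondences of distortion tending to zero via Lemma \ref{Ldis} and pulls the witnesses back through them; your added diameter bound is then superfluous.
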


\begin{proof}
Let $P\subset\mathfrak{M}$ be the set of non totally anisometric space.
\begin{align*}
P  &  =\left\{  X\left\vert
\begin{array}
[c]{l}%
\exists x,y,x^{\prime},y^{\prime}\in X~d\left(  x,y\right)  =d\left(
x^{\prime},y^{\prime}\right)  >0\\
\phantom{\exists x,y,x^{\prime},y^{\prime}\in X\ }\nph{and\ }and~d\left(
x,x^{\prime}\right)  +d\left(  y,y^{\prime}\right)  >0\\
\phantom{\exists x,y,x^{\prime},y^{\prime}\in X\ }\nph{and\ }and~d\left(
x,y^{\prime}\right)  +d\left(  x^{\prime},y\right)  >0
\end{array}
\right.  \right\} \\
&  =\bigcup_{n\in\mathbb{N}}P_{\frac{1}{n}}\text{,}%
\end{align*}
where%
\[
P_{\varepsilon}=\left\{  X\left\vert
\begin{array}
[c]{l}%
\exists x,y,x^{\prime},y^{\prime}\in X~d\left(  x,y\right)  =d\left(
x^{\prime},y^{\prime}\right)  \geq\varepsilon\\
\phantom{\exists x,y,x^{\prime},y^{\prime}\in X\ }\nph{and\ }and~d\left(
x,x^{\prime}\right)  +d\left(  y,y^{\prime}\right)  \geq\varepsilon\\
\phantom{\exists x,y,x^{\prime},y^{\prime}\in X\ }\nph{and\ }and~d\left(
x,y^{\prime}\right)  +d\left(  x^{\prime},y\right)  \geq\varepsilon
\end{array}
\right.  \right\}  \text{.}%
\]
By virtue of Corollary \ref{C1}, it is sufficient to prove that
$P_{\varepsilon}$ is closed. Let $\left(  X_{n}\right)  _{n\in\mathbb{N}}$ be
a sequence of elements of $P_{\varepsilon}$ tending to $X\in\mathfrak{M}$.
There exists a sequence of correspondences $R_{n}$ from $X$ to $X_{n}$ such
that $\dis\left(  R_{n}\right)  $ tends to zero. Let $x_{n},y_{n}%
,x_{n}^{\prime},y_{n}^{\prime}\in X_{n}$ be such that
\begin{align}
d\left(  x_{n,}y_{n}\right)  =d\left(  x_{n}^{\prime},y_{n}^{\prime}\right)
&  \geq\varepsilon\nonumber\\
d\left(  x_{n},x_{n}^{\prime}\right)  +d\left(  y_{n},y_{n}^{\prime}\right)
&  \geq\varepsilon\label{5}\\
d\left(  y_{n},x_{n}^{\prime}\right)  +d\left(  x_{n},y_{n}^{\prime}\right)
&  \geq\varepsilon\text{.}\nonumber
\end{align}
There exists $\tilde{x}_{n},\tilde{x}_{n}^{\prime},\tilde{y}_{n},\tilde{y}%
_{n}^{\prime}\in X$ such that $\tilde{x}_{n}R_{n}x_{n}$, $\tilde{y}_{n}%
R_{n}y_{n}$, $\tilde{x}_{n}^{\prime}R_{n}x_{n}^{\prime}$, and $\tilde{y}%
_{n}^{\prime}R_{n}y_{n}^{\prime}$. Let $\left(  \tilde{x},\tilde{x}^{\prime
},\tilde{y},\tilde{y}^{\prime}\right)  $ be the limit of a converging
subsequence of $\left(  \tilde{x}_{n}\right.  ,\allowbreak\tilde{x}%
_{n}^{\prime},\allowbreak\tilde{y}_{n},\allowbreak\left.  \tilde{y}%
_{n}^{\prime}\right)  $. Since $\dis\left(  R_{n}\right)  $ tends to zero, we
can pass to the limit in (\ref{5}):%
\begin{align*}
d\left(  \tilde{x},\tilde{y}\right)  =d\left(  \tilde{x}^{\prime},\tilde
{y}^{\prime}\right)   &  \geq\varepsilon\\
d\left(  \tilde{x},\tilde{x}^{\prime}\right)  +d\left(  \tilde{y},\tilde
{y}^{\prime}\right)   &  \geq\varepsilon\\
d\left(  \tilde{y},\tilde{x}^{\prime}\right)  +d\left(  \tilde{x},\tilde
{y}^{\prime}\right)   &  \geq\varepsilon\text{.}%
\end{align*}
Hence $X$ belongs to $P_{\varepsilon}$. This completes the proof.
\end{proof}

\begin{theorem}
\label{T3}In a generic compact metric space, three distinct points are never collinear.
\end{theorem}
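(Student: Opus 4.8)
The plan is to imitate closely the proof of Theorem~\ref{T2}. Let $P\subset\mathfrak{M}$ be the set of compact metric spaces that contain a collinear triple of pairwise distinct points. Its complement is exactly the collection of spaces in which no three distinct points are collinear, so it suffices to show that $P$ is meager. For any collinear triple, relabel the points so that $y$ denotes the \emph{middle} one (the point whose two distances to the others sum to the remaining distance); then the triple satisfies $d(x,z)=d(x,y)+d(y,z)$ with $d(x,y),d(y,z)>0$. This suggests writing $P=\bigcup_{n\in\mathbb{N}}P_{1/n}$, where
\[
P_{\varepsilon}=\left\{  X\in\mathfrak{M}\mid\exists\,x,y,z\in X,\ d\left(  x,z\right)  =d\left(  x,y\right)  +d\left(  y,z\right)  ,\ d\left(  x,y\right)  \geq\varepsilon,\ d\left(  y,z\right)  \geq\varepsilon\right\}  \text{.}
\]
Here the two lower bounds $d(x,y)\geq\varepsilon$ and $d(y,z)\geq\varepsilon$ force $d(x,z)\geq 2\varepsilon$, so the three points are automatically pairwise distinct; this is the crucial feature that will survive the passage to a limit. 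Any $X\in P$ lies in some $P_{1/n}$ (take $1/n\leq\min(d(x,y),d(y,z))$), and conversely $P_{1/n}\subset P$, so the decomposition is correct.

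By Corollary~\ref{C1}, totally anisometric spaces without collinear points are dense in $\mathfrak{M}$, and each $P_{\varepsilon}$ is contained in the complement of this dense set, hence has empty interior. Thus the only remaining task is to prove that each $P_{\varepsilon}$ is closed; then $P$ is a countable union of rare sets, \ie meager, and its complement is residual.

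To establish closedness I would argue exactly as in Theorem~\ref{T2}. Let $(X_k)_{k\in\mathbb{N}}$ be a sequence in $P_{\varepsilon}$ converging to $X$. Using Lemma~\ref{Ldis}, choose correspondences $R_k$ between $X$ and $X_k$ with $\dis(R_k)\to 0$. For each $k$ select a witnessing triple $x_k,y_k,z_k\in X_k$, and pull it back through $R_k$ to points $\tilde{x}_k,\tilde{y}_k,\tilde{z}_k\in X$ (so that $\tilde{x}_k R_k x_k$, and so on). By compactness of $X$ extract a subsequence along which $(\tilde{x}_k,\tilde{y}_k,\tilde{z}_k)$ converges to some $(\tilde{x},\tilde{y},\tilde{z})\in X^3$. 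Since each pairwise distance in $X$ differs from the corresponding distance in $X_k$ by at most $\dis(R_k)\to 0$, both the equality $d(x_k,z_k)=d(x_k,y_k)+d(y_k,z_k)$ and the two inequalities pass to the limit, yielding $d(\tilde{x},\tilde{z})=d(\tilde{x},\tilde{y})+d(\tilde{y},\tilde{z})$ together with $d(\tilde{x},\tilde{y})\geq\varepsilon$ and $d(\tilde{y},\tilde{z})\geq\varepsilon$. Hence $X\in P_{\varepsilon}$, and $P_{\varepsilon}$ is closed.

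The limiting argument is routine once the sets $P_{\varepsilon}$ are defined correctly, so the only real obstacle is the choice of definition. The key realization is that one must bound below the two smaller distances $d(x,y)$ and $d(y,z)$ rather than merely assert that the three points are distinct: without a uniform lower bound the witnessing triples could degenerate in the limit (two of $\tilde{x},\tilde{y},\tilde{z}$ could coincide), which would make the limiting collinearity relation vacuous. These lower bounds play exactly the role that the bounds on $d(x,x')+d(y,y')$ play in Theorem~\ref{T2}: they keep the configuration nondegenerate under the Gromov-Hausdorff limit. One could alternatively invoke Lemma~\ref{LPS} to realize all $X_k$ and $X$ inside a common compact space and argue via Hausdorff convergence, but the correspondence/distortion route is shorter here.
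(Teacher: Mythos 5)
Your proof is correct and is exactly the paper's argument: the paper defines the same set $P_{\varepsilon}$ (with lower bounds on the two shorter distances of the collinear triple, modulo an evident typo where the same inequality $d(x,z)\geq\varepsilon$ is written twice), invokes Corollary \ref{C1} for density, and then simply says the closedness proof is ``totally similar to the one of Theorem \ref{T2}''. You have merely written out in full the correspondence/distortion limiting argument that the paper leaves implicit, including the key observation that the $\varepsilon$-lower bounds prevent the witnessing triple from degenerating in the limit.
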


\begin{proof}
By Corollary \ref{C1}, it is sufficient to prove that
\[
P_{\varepsilon}\overset{def}{=}\left\{  X\in M\left\vert
\begin{array}
[c]{l}%
\exists x,y,z\in X~d\left(  x,y\right)  =d\left(  x,z\right)  +d\left(
z,y\right) \\
\phantom{\exists x,y,z\in X\ }\nph{and\ }and~d\left(  x,z\right)
\geq\varepsilon~and~d\left(  x,z\right)  \geq\varepsilon
\end{array}
\right.  \right\}
\]
is closed. The proof is totally similar to the one of Theorem \ref{T2}.
\end{proof}

\begin{theorem}
\label{T6}A generic compact metric space is perfect.
\end{theorem}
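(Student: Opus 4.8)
The plan is to follow the scheme of Theorems \ref{T2} and \ref{T3}. A compact space fails to be perfect exactly when it has an isolated point, \ie a point $x$ admitting some $\varepsilon>0$ with $d\left(  x,y\right)  \geq\varepsilon$ for all $y\neq x$. Accordingly I would set
\[
P_{\varepsilon}=\left\{  X\in\mathfrak{M}\left\vert ~\exists x\in X~\forall y\in X\setminus\left\{  x\right\}  ~d\left(  x,y\right)  \geq\varepsilon\right.  \right\}  \text{,}
\]
so that the set of non-perfect spaces is $\bigcup_{n\in\mathbb{N}}P_{1/n}$. It then suffices to prove that each $P_{\varepsilon}$ is rare, that is closed with empty interior.

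Closedness would be handled just as in Theorem \ref{T2}. Given a sequence $\left(  X_{n}\right)  $ in $P_{\varepsilon}$ converging to $X$, take correspondences $R_{n}$ from $X$ to $X_{n}$ with $\dis\left(  R_{n}\right)  \to0$, and pick an $\varepsilon$-isolated point $x_{n}\in X_{n}$. Choosing $\tilde{x}_{n}\in X$ with $\tilde{x}_{n}R_{n}x_{n}$ and extracting a convergent subsequence $\tilde{x}_{n}\to\tilde{x}$, I would claim that $\tilde{x}$ is $\varepsilon$-isolated in $X$. Indeed, were $d\left(  \tilde{x},y\right)  <\varepsilon$ for some $y\neq\tilde{x}$, choosing $y_{n}\in X_{n}$ with $yR_{n}y_{n}$ would give $d\left(  x_{n},y_{n}\right)  \leq d\left(  \tilde{x}_{n},y\right)  +\dis\left(  R_{n}\right)  <\varepsilon$ for large $n$; since $x_{n}$ is $\varepsilon$-isolated this forces $y_{n}=x_{n}$, whence $d\left(  \tilde{x}_{n},y\right)  \leq\dis\left(  R_{n}\right)  \to0$ and $y=\tilde{x}$, a contradiction. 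Hence $X\in P_{\varepsilon}$ and $P_{\varepsilon}$ is closed.

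The real difficulty is the emptiness of the interior, for here the density result of Corollary \ref{C1} is useless: a finite space (totally anisometric or not) consists entirely of isolated points, so finite spaces themselves lie in $P_{\varepsilon}$ for small $\varepsilon$. I therefore need a fresh dense family avoiding $P_{\varepsilon}$, supplied by a doubling construction. Given $X$ and $\delta>0$, first approximate $X$ by a finite space $F=\left\{  a_{1},\dots,a_{k}\right\}  $ with $d_{GH}\left(  X,F\right)  <\delta/2$. Fix $t$ with $0<t<\min\left(  \delta,\varepsilon,\cdm\left(  F\right)  \right)  $ and let $F^{\prime}$ be obtained by replacing each $a_{i}$ by a twin pair $\left\{  a_{i}^{+},a_{i}^{-}\right\}  $, with $d\left(  a_{i}^{+},a_{i}^{-}\right)  =t$ and $d\left(  a_{i}^{\pm},a_{j}^{\pm}\right)  =d\left(  a_{i},a_{j}\right)  $ for $i\neq j$; the triangle inequality is readily checked using $t<\cdm\left(  F\right)  $. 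Every point of $F^{\prime}$ has its twin within distance $t<\varepsilon$, so $F^{\prime}\notin P_{\varepsilon}$, while the correspondence relating each $a_{i}$ to both $a_{i}^{\pm}$ has distortion at most $t$, giving $d_{GH}\left(  F,F^{\prime}\right)  \leq t/2<\delta/2$ by Lemma \ref{Ldis}. Thus $d_{GH}\left(  X,F^{\prime}\right)  <\delta$, the complement of $P_{\varepsilon}$ is dense, and $P_{\varepsilon}$ is rare. The countable union $\bigcup_{n}P_{1/n}$ is then meager and a generic compact metric space is perfect. The main obstacle, as indicated, is precisely this construction of a perturbation that destroys isolation while remaining Gromov-Hausdorff close.
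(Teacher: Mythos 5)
Your proof is correct, and its overall scheme is the one the paper uses: the same decomposition of the non-perfect spaces into sets $P_{\varepsilon}$ of spaces containing an $\varepsilon$-isolated point (the paper phrases this via $A_{n}=\left\{ 0\right\} \cup\left] \frac{1}{n},+\infty\right[ $), followed by a correspondence-based compactness argument for closedness and a density argument for emptiness of the interior. Your closedness argument is airtight (and your choice of the non-strict inequality $\geq\varepsilon$ actually makes the closedness cleaner than the paper's strict version). The genuine divergence is in the density step, which both you and the paper identify as the substantive point. The paper proves the stronger statement that \emph{perfect} spaces are dense in $\mathfrak{M}$: it attaches a segment to each point of a finite approximation $F$, \ie endows $F_{\varepsilon}=F\times\left[ 0,\varepsilon\right] $ with the metric $d\left( \left( a,s\right) ,\left( b,t\right) \right) =d^{F}\left( a,b\right) +s+t$ for $a\neq b$ and $\left\vert t-s\right\vert $ for $a=b$; this single family of perfect approximants avoids every $P_{n}$ simultaneously. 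Your twin-doubling instead produces, for each fixed $\varepsilon$, a \emph{finite} space $F^{\prime}$ with no $\varepsilon$-isolated point; $F^{\prime}$ is of course not perfect, but it need not be, since rarity of $P_{\varepsilon}$ only requires density of its complement. Your variant is slightly more elementary --- it stays inside $\mathfrak{M}_{F}$, and the metric and distortion checks (using $t<\cdm\left( F\right) $) are immediate --- at the cost of a construction depending on $\varepsilon$ and of not yielding the density of perfect spaces as a byproduct.
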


\begin{proof}
Let $P_{n}=\left\{  X\in\mathfrak{M}|\exists x\in X~\forall x^{\prime}\in
X~d\left(  x,x^{\prime}\right)  \in A_{n}\right\}  $, where $A_{n}%
\overset{def}{=}\left\{  0\right\}  \cup\left]  \frac{1}{n},+\infty\right[  $.
The set of non perfect compact metric spaces is the union $\bigcup
_{n\in\mathbb{N}}P_{n}$. It is therefore sufficient to prove that $P_{n}$ is rare.

We claim that the set of perfect compact metric spaces is dense in
$\mathfrak{M}$, and so the interior of $P_{n}$ is empty. Indeed, it is
sufficient to prove that any finite metric space $F$ can be approximated by
perfect spaces. Let $\varepsilon$ be less than the codiameter of $F$ and endow
the product $F_{\varepsilon}\overset{def}{=}F\times\left[  0,\varepsilon
\right]  $ with the distance%
\begin{align*}
d^{F_{\varepsilon}}\left(  \left(  a,s\right)  ,\left(  b,t\right)  \right)
&  =d^{F}\left(  a,b\right)  +s+t\text{, if }a\neq b\\
d^{F_{\varepsilon}}\left(  \left(  a,t\right)  ,\left(  a,s\right)  \right)
&  =\left\vert t-s\right\vert \text{.}%
\end{align*}
It is easy to see that $d^{F_{\varepsilon}}$ is a distance on $F_{\varepsilon
}$, that $F_{\varepsilon}$ is perfect and that $d\left(  F_{\varepsilon
},\varepsilon\right)  \leq\varepsilon$. This proves the claim.

It remains to prove that $P_{n}$ is closed. Let $\left(  X_{k}\right)
_{k\in\mathbb{N}}$ be a sequence of elements of $P_{n}$ converging to
$Y\in\mathfrak{M}$. Let $R_{k}$ be a correspondence between $X_{k}$ and $Y$
such that $\varepsilon_{k}\overset{def}{=}\dis\left(  R_{k}\right)
\leq3d_{GH}\left(  X_{k},Y\right)  $. Let $x_{k}\in X_{k}$ be such that for
all $x^{\prime}\in X_{k}$, $d^{X_{k}}\left(  x_{k},x^{\prime}\right)  \in
A_{n}$. Let $y_{k}\in Y$ correspond to $x_{k}$ by $R_{k}$. By extracting a
suitable subsequence, we can assume that the sequence $\left(  y_{k}\right)
_{k\in\mathbb{N}}$ converges to some point $y\in Y$. Let $y^{\prime}$ be a
point of $Y$ and let $x_{k}^{\prime}\in X_{k}$ correspond to $y^{\prime}$ by
$R_{k}$. We have%
\begin{align*}
d^{Y}\left(  y,y^{\prime}\right)   &  \in\left\{  d^{Y}\left(  y_{k}%
,y^{\prime}\right)  \right\}  +2d^{Y}\left(  y,y_{k}\right) \\
&  \subset\left\{  d^{X_{k}}\left(  x_{k},x_{k}^{\prime}\right)  \right\}
+2\left(  d^{Y}\left(  y,y_{k}\right)  +\varepsilon_{k}\right) \\
&  \subset A_{n}+2\left(  d^{Y}\left(  y,y_{k}\right)  +\varepsilon
_{k}\right)  \text{.}%
\end{align*}
Since the relation holds for arbitrary large $k$, we have $d^{Y}\left(
y,y^{\prime}\right)  \in A_{n}$, whence $Y\in P_{n}$.
\end{proof}

\begin{corollary}
\label{C6}A generic compact metric space is a Cantor space.
\end{corollary}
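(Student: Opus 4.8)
The plan is to invoke the classical topological characterization of the Cantor set, due to Brouwer: a topological space is homeomorphic to the Cantor set if and only if it is nonempty, compact, metrizable, perfect, and totally disconnected. Every element of $\mathfrak{M}$ is, by its very construction, a nonempty compact metrizable space, so the first three conditions are automatic; only \emph{perfect} and \emph{totally disconnected} remain, and both have already been secured generically. Indeed, Theorem \ref{T6} exhibits a residual set of perfect compact metric spaces, while Theorem \ref{T1} exhibits a residual set of totally discontinuous (\ie totally disconnected) ones.

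First I would record that $\mathfrak{M}$ is complete, hence a Baire space, so that a finite intersection of residual subsets is again residual. Consequently the set of compact metric spaces that are \emph{simultaneously} perfect and totally disconnected is residual in $\mathfrak{M}$. Next I would verify that any space $X$ lying in this intersection meets every hypothesis of Brouwer's theorem: it is nonempty, compact, and metrizable simply as a point of $\mathfrak{M}$; it is perfect by Theorem \ref{T6}; and it is totally disconnected by Theorem \ref{T1}. Brouwer's characterization then gives that $X$ is homeomorphic to the Cantor set, which is exactly the assertion.

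There is no genuinely hard step here, since all the analytic work was carried out in the two cited theorems; the only ingredients are the stability of residuality under finite intersection and the classical characterization theorem. The single point deserving a word of care is that the \emph{perfect} condition rules out the degenerate totally disconnected space consisting of one point, thereby guaranteeing that $X$ is infinite, which is precisely what makes Brouwer's theorem applicable.
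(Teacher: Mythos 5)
Your proof is correct and is essentially identical to the paper's own argument: the paper likewise invokes Brouwer's characterization of the Cantor set (nonempty, perfect, compact, totally disconnected, metrizable) and combines Theorems \ref{T1} and \ref{T6}, using implicitly that the intersection of two residual subsets of the Baire space $\mathfrak{M}$ is residual. You merely spell out the routine details (completeness of $\mathfrak{M}$, automatic nonemptiness, compactness and metrizability of its elements) that the paper leaves tacit.
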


\begin{proof}
A Brouwer's Theorem (\cite[(7.4)]{Kech}), states that a topological space is a
Cantor space if and only if it is non-empty, perfect, compact, totally
disconnected, and metrizable. Thus the result follows from Theorems \ref{T1}
and \ref{T6}.
\end{proof}

\begin{theorem}
\label{T4} Let $X$ be a generic compact metric space. The set $d\left(
X\right)  \overset{def}{=}\left\{  d^{X}\left(  x,y\right)  |x,y\in X\right\}
$ is homeomorphic to the Cantor set.
\end{theorem}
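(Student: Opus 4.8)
The plan is to characterize the set $d(X)$ for a Cantor space $X$ that is also totally anisometric, and then use the Brouwer characterization (as in Corollary \ref{C6}) to conclude that $d(X)$ is itself a Cantor set. Since the properties established in Theorems \ref{T1}, \ref{T2}, \ref{T6} and Corollary \ref{C6} each hold on a residual set, their intersection is still residual; so I may freely assume that $X$ is a totally anisometric Cantor space and prove that $d(X)$ is a Cantor set for every such $X$. To apply Brouwer's theorem to $d(X)\subset\mathbb{R}$, I must verify that it is non-empty, compact, perfect, totally disconnected, and metrizable. Non-emptiness is trivial (any two distinct points give a positive value, and $0\in d(X)$), and metrizability is automatic as a subset of $\mathbb{R}$. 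Compactness follows because $d^{X}:X\times X\to\mathbb{R}$ is continuous and $X\times X$ is compact, so $d(X)$ is the continuous image of a compact set.

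It remains to prove that $d(X)$ is \emph{perfect} and \emph{totally disconnected}; these are the two substantial points. For total disconnectedness I would argue by contradiction: suppose $d(X)$ contained a nondegenerate interval $[a,b]$ with $0<a<b$. The idea is that this forces a one-parameter continuum of distance values, which should contradict total anisometry combined with the fact that $X$ is totally disconnected. More precisely, total anisometry means the map $d^{X}$ is injective on unordered pairs of points once we factor out the trivial symmetry, so a whole interval of attained values would require the set of realizing pairs to contain a nondegenerate connected piece, contradicting that $X$ (and hence $X\times X$) is totally disconnected. I would formalize this by noting that $\{(x,y)\in X\times X : d^{X}(x,y)\in[a,b]\}$ maps onto $[a,b]$, and use a connectedness/continuity argument to extract a nondegenerate connected subset on which $d^{X}$ is injective, which is impossible in a totally disconnected space.

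For \emph{perfectness} I must show $d(X)$ has no isolated point. Let $r=d^{X}(x,y)\in d(X)$ with $x\neq y$; I want distinct distance values arbitrarily close to $r$. Since $X$ is perfect, $x$ and $y$ are each limits of other points of $X$; approximating $x$ by $x_k\to x$ with $x_k\neq x$ gives $d^{X}(x_k,y)\to d^{X}(x,y)=r$ by continuity, and total anisometry guarantees these approximating values are distinct from $r$ and from each other for $k$ large. This produces a sequence in $d(X)\setminus\{r\}$ converging to $r$, so $r$ is not isolated. The value $r=0$ requires a separate but easy remark: $0$ is a limit of positive distance values because $X$ is perfect (nearby distinct points realize arbitrarily small positive distances). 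I expect the \textbf{total disconnectedness} step to be the main obstacle, since it is where the interaction between the metric structure (total anisometry) and the topology (total disconnectedness of $X$) must be made rigorous; the perfectness argument, by contrast, is a direct consequence of $X$ being perfect together with continuity and anisometry.
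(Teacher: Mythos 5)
Your overall plan (pass to the residual set of totally anisometric Cantor spaces and prove deterministically that $d(X)$ is then a Cantor set) diverges from the paper, and your perfectness argument is fine --- it is essentially the paper's, which observes that by total anisometry each distance function $f_{x_{0}}=d^{X}\left(  x_{0},\cdot\right)  $ is injective, hence a homeomorphism of $X$ onto $f_{x_{0}}\left(  X\right)  \subset d\left(  X\right)  $, and then invokes Theorem \ref{T6}. The gap is exactly where you feared: total disconnectedness. Your key inference --- that a whole interval of attained values ``would require the set of realizing pairs to contain a nondegenerate connected piece'' --- is false as a general principle. A compact totally disconnected set can surject continuously onto an interval (the Cantor set maps onto $\left[  0,1\right]  $), and it can even do so by a map that is at most two-to-one (binary expansion). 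Now on $K=\left\{  \left(  x,y\right)  \in X\times X~|~d^{X}\left(  x,y\right)  \in\left[  a,b\right]  \right\}  $ the map $d^{X}$ is precisely two-to-one: since $a>0$ and $X$ is totally anisometric, the fiber over $v$ is exactly $\left\{  \left(  x,y\right)  ,\left(  y,x\right)  \right\}  $. So ``two-to-one surjection from a totally disconnected compactum onto $\left[  a,b\right]  $'' is, by itself, not a contradiction, and there is no nondegenerate connected subset of $K$ to extract --- nor does the surjection need one.

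The statement you want is nevertheless true, but it needs an idea absent from your sketch: the fibers of $d^{X}|_{K}$ are not arbitrary two-point sets, they are the orbits of the \emph{continuous, fixed-point-free involution} $\sigma\left(  x,y\right)  =\left(  y,x\right)  $ (fixed-point-free on $K$ because $d\geq a>0$ there). Using the fact that a compact metric totally disconnected space has a basis of clopen sets, one can build a clopen fundamental domain: cover $K$ by finitely many clopen sets $V$ with $V\cap\sigma\left(  V\right)  =\emptyset$ and disjointify, obtaining a clopen $U$ with $K=U\sqcup\sigma\left(  U\right)  $. Then $d^{X}|_{U}:U\rightarrow\left[  a,b\right]  $ is a continuous bijection from a compact space onto $\left[  a,b\right]  $, hence a homeomorphism, and connectedness of $\left[  a,b\right]  $ contradicts total disconnectedness of $U$. (Equivalently, pass to the quotient $K/\sigma$, which is still compact and totally disconnected, and on which $d^{X}$ induces a continuous bijection onto $\left[  a,b\right]  $.) It is worth noting that the paper sidesteps this difficulty entirely: it proves total disconnectedness of $d\left(  X\right)  $ by a second Baire-category argument, showing that $P_{a,\varepsilon}=\left\{  X\in\mathfrak{M}~|~\left[  a,a+\varepsilon\right]  \subset d\left(  X\right)  \right\}  $ is closed (via correspondences) and has empty interior (because $\mathfrak{M}_{F}$ is dense), so that the union over rational $a$ and $\varepsilon=1/n$ is meager. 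That route is shorter and avoids the deterministic implication on which your plan depends.
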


\begin{proof}
By virtue of the Brouwer's theorem we quote in the proof of Corollary
\ref{C6}, it is sufficient to prove that $d\left(  X\right)  $ is totally
discontinuous and perfect.

Let $P_{a,\varepsilon}=\left\{  X\in\mathfrak{M}|[a,a+\varepsilon]\subset
d\left(  X\right)  \right\}  $. For $\varepsilon>0$, $\mathfrak{M}_{F}\cap
P_{a,\varepsilon}=\emptyset$, whence $P_{a,\varepsilon}$ has empty interior.
The space $X$ admits a non totally discontinuous $d\left(  X\right)  $ if and
only if it belongs to the countable union $\bigcup_{\substack{a\geq
0\\a\in\mathbb{Q}}}\bigcup_{n\in\mathbb{N}}P_{a,\frac{1}{n}}$. Hence we only
have to prove that $P_{a,\varepsilon}$ is closed. Let $\left(  X_{k}\right)
_{k\in\mathbb{N}}$ be a sequence of elements of $P_{a,\varepsilon}$ converging
to $X\in\mathfrak{M}$. Let $R_{k}$ be a correspondence between $X_{k}$ and $X$
such that $\dis\left(  R_{k}\right)  $ tends to zero. Let $b$ be a number of
$\left[  a,a+\varepsilon\right]  $. By hypothesis, there exist $x_{k},y_{k}\in
X_{k}$ such that $d^{X_{k}}\left(  x_{k},y_{k}\right)  =b$. Let $x_{k}%
^{\prime},y_{k}^{\prime}\in X$ correspond by $R_{k}$ to $x_{k}$ and $y_{k}$
respectively, and extract from $\left(  x_{k}^{\prime}\right)  $ and $\left(
y_{k}^{\prime}\right)  $ some subsequences converging to $x$ and $y$
respectively. Since
\begin{align*}
\left\vert b-d\left(  x_{k}^{\prime},y_{k}^{\prime}\right)  \right\vert  &
=\left\vert d\left(  x_{k},y_{k}\right)  -d\left(  x_{k}^{\prime}%
,y_{k}^{\prime}\right)  \right\vert \\
&  \leq\dis\left(  R_{k}\right)  \rightarrow0\text{,}%
\end{align*}
$d\left(  x,y\right)  =b$, and $b\in d\left(  X\right)  $. This holds for all
$b$ in $\left[  a,a+\varepsilon\right]  $, whence $X\in P_{a,\varepsilon}$.
Hence $P_{\alpha,\varepsilon}$ is closed. It follows that for a generic
$X\in\mathfrak{M}$, $d\left(  X\right)  $ is totally discontinuous.

We will now prove that $d\left(  X\right)  $ is perfect. Note that $d\left(
X\right)  =\bigcup_{x_{0}\in X}f_{x_{0}}\left(  X\right)  $, where $f_{x_{0}%
}:X\rightarrow\mathbb{R}$ is the distance function from $x_{0}\in X$. So, it
is sufficient to prove that $f_{x_{0}}\left(  X\right)  $ is perfect. Since by
Theorem \ref{T2} a generic $X\in\mathfrak{M}$ is totally anisometric, the
functions $f_{x_{0}}$ are injective, and so are homomorphisms between $X$ and
$f_{x_{0}}\left(  X\right)  $. Theorem \ref{T6} completes the proof.
\end{proof}

\begin{theorem}
\label{T7}The set of compact metric spaces which contain a $4$-points subspace
that cannot be isometrically imbedded in $\mathbb{\mathbb{R}}^{3}$ is open and
dense in $\mathfrak{M}$. Therefore, a generic compact metric space cannot be
embedded in any Hilbert space.
\end{theorem}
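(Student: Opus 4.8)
The plan is to prove the topological statement first---that the set $\mathcal{U}$ of compact metric spaces containing a non-Euclidean quadruple is open and dense---and then read off both the genericity and the Hilbert-space obstruction. The conceptual input is the classical fact that any four points of a Hilbert space $H$ lie in an affine subspace of dimension at most $3$, hence are isometric to four points of $\mathbb{R}^3$; consequently a $4$-point subspace that fails to embed in $\mathbb{R}^3$ also fails to embed in $H$. Embeddability of a $4$-point metric space in $\mathbb{R}^3$ is governed by the Cayley--Menger determinant (equivalently, by the existence of a real apex over a triangle), and I would exhibit an explicit obstruction: three points forming an equilateral triangle of side $1$ together with a fourth point at distance $r$ from each of them, where $\frac12\le r<\frac{1}{\sqrt3}$. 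All triangle inequalities hold (the only faces are $1,r,r$ with $2r\ge 1$, and the base $1,1,1$), so this is a genuine metric space; yet the putative apex height satisfies $h^2=r^2-\frac13<0$, so the quadruple embeds in no Euclidean space.

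For openness I would use Lemma~\ref{Ldis}. Non-embeddability in $\mathbb{R}^3$ is an open condition on the six distances of a quadruple, being expressed by the strict inequality $CM<0$ for the continuous Cayley--Menger determinant $CM$. So if $X$ contains a bad quadruple $\{p_1,\dots,p_4\}$, there is $\delta>0$ such that every $4$-point metric whose distances lie within $\delta$ of those of the $p_i$ is again non-embeddable, and we may take $\delta$ smaller than the codiameter of the quadruple. For $Y$ with $d_{GH}(X,Y)$ small there is a correspondence $R$ of distortion $<\delta$; choosing $q_i\in Y$ with $p_iRq_i$ gives $|d^Y(q_i,q_j)-d^X(p_i,p_j)|<\delta$, so the $q_i$ are pairwise distinct and form a bad quadruple in $Y$. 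Hence $\mathcal{U}$ is open.

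For density, since $\mathfrak{M}_F$ is dense it suffices to approximate a finite space $F$. Fixing a point $x_0\in F$, I would attach three new points $p_1,p_2,p_3$ so that $\{x_0,p_1,p_2,p_3\}$ carries the bad metric above rescaled to a small diameter $\varepsilon$, while setting $d(p_i,x)=d^F(x_0,x)$ for every other $x\in F$. Checking that this defines a metric is routine: the only delicate triangle inequalities have the form $d(p_i,p_j)\le d(p_i,x)+d(x,p_j)=2\,d^F(x_0,x)$, which hold once $\varepsilon<\cdm(F)$. The inclusion $F\hookrightarrow F'$ with each $p_i$ paired to $x_0$ is a correspondence of distortion at most $\varepsilon$, so $d_{GH}(F,F')\le\varepsilon/2$, and $F'$ contains the bad quadruple. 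Thus $\mathcal{U}$ is dense.

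Being open and dense, $\mathcal{U}$ is residual, so a generic compact metric space contains a quadruple with no isometric copy in $\mathbb{R}^3$, and by the first paragraph it embeds in no Hilbert space. The main obstacle is the density construction: one must attach the obstruction at a scale below the codiameter, so that both the triangle inequalities and the smallness of the Gromov--Hausdorff perturbation are preserved; producing a cleanly and explicitly non-embeddable quadruple, and securing openness through the strictness of $CM<0$, are the remaining essential points.
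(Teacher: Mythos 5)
Your proposal is correct and takes essentially the same approach as the paper: the paper also defines the relevant set via a strictly negative Cayley--Menger determinant, proves openness with a correspondence of distortion kept below both the continuity threshold and the codiameter of the bad quadruple, and proves density by attaching to a base point $x_{0}$ of a finite space an equilateral triple equidistant from it --- its explicit example $A_{\varepsilon}$ is precisely your obstruction with $r=\frac{1}{2}$ (side $2\varepsilon$, apex distance $\varepsilon$). The only cosmetic differences are your geometric apex-height argument $h^{2}=r^{2}-\frac{1}{3}<0$ in place of the paper's direct computation $\phi\left(  A_{\varepsilon}\right)  =-\frac{1}{9}\varepsilon^{6}$, and your choice $d\left(  p_{i},x\right)  =d^{F}\left(  x_{0},x\right)  $ versus the paper's $d^{F}\left(  x_{0},x\right)  +\varepsilon$, both of which yield valid metrics and small Gromov--Hausdorff perturbations.
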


\begin{proof}
It is well-known (see for instance \cite[(10.6.5)]{BergerGI94}) that the
square of the volume of a (possibly degenerated) tetrahedron of $\mathbb{R}%
^{3}$ is given by the following formula (the so-called Cayley-Menger
determinant)
\[
\phi\left(  r,s,t,r^{\prime},s^{\prime},t^{\prime}\right)  =\frac{1}%
{288}\left\vert
\begin{array}
[c]{ccccc}%
0 & 1 & 1 & 1 & 1\\
1 & 0 & t^{2} & s^{2} & r^{\prime2}\\
1 & t^{2} & 0 & r^{2} & s^{\prime2}\\
1 & s^{2} & r^{2} & 0 & t^{\prime2}\\
1 & r^{\prime2} & s^{\prime2} & t^{\prime2} & 0
\end{array}
\right\vert \text{,}%
\]
where $r$, $s$ and $t$ are the lengths of the sides of a one of the faces of
the tetrahedron, and $r^{\prime}$, $s^{\prime}$, $t^{\prime}$ are the lengths
of the edges respectively opposite to the ones of length $r$, $s$, $t$.
Whereas $\phi$ was initially defined only for the sextuples $\left(
r,s,t,r^{\prime},s^{\prime},t^{\prime}\right)  $ which actually correspond to
a tetrahedron, as a polynomial function, it can be extended to $\mathbb{R}%
^{6}$. Given a metric space $A=\left\{  a_{0},a_{1},a_{2},a_{3}\right\}
\in\mathfrak{M}_{4}$, we put%
\begin{align*}
\phi\left(  A\right)   &  \overset{def}{=}\phi\left(  d^{A}\left(  a_{1}%
,a_{2}\right)  ,d^{A}\left(  a_{2},a_{3}\right)  ,d^{A}\left(  a_{3}%
,a_{1}\right)  ,\right. \\
&  \phantom{\phi \left( d^{A}\left( a_{1},a_{2}\right) ,\right.}d^{A}\left(
a_{0},a_{3}\right)  ,\left.  d^{A}\left(  a_{0},a_{1}\right)  ,d^{A}\left(
a_{0},a_{2}\right)  \right)  \text{.}%
\end{align*}
If $\phi\left(  A\right)  <0$, then surely $A$ cannot be isometrically
embedded in $\mathbb{R}^{3}$, nor in any Hilbert space. We will prove that the
set%
\[
P\overset{def}{=}\left\{  X\in\mathfrak{M}\left\vert \exists A\subset
X~\card\left(  A\right)  =4~and~\phi\left(  A\right)  <0\right.  \right\}
\]
is open and dense in $\mathfrak{M}$.

Let $X$ be in $P$ and $A=\left\{  a_{0},a_{1},a_{2},a_{3}\right\}  \subset X$,
such that $\card\left(  A\right)  =4$ and $\phi\left(  A\right)  <0$. Since
$\phi$ is continuous, there exists $\eta>0$ such that, for any $A^{\prime}%
\in\mathfrak{M}_{4}$, if there exists a correspondence between $A$ and
$A^{\prime}$ whose distortion is less than $\eta$, then $\phi\left(
A^{\prime}\right)  <0$. Let $Y\in U\overset{def}{=}B\left(  X,\frac{1}{3}%
\min\left(  \eta,\cdm\left(  A\right)  \right)  \right)  $; there exists a
correspondence $R$ of distortion less than $\min\left(  \eta,\cdm\left(
A\right)  \right)  $ between $X$ and $Y$. Let $A^{\prime}=\left\{
a_{0}^{\prime},a_{1}^{\prime},a_{2}^{\prime},a_{3}^{\prime}\right\}  \subset
Y$ be such that $a_{i}Ra_{i}^{\prime}$, $i=0,\dots,3$. Since $\dis\left(
R\right)  <\cdm\left(  A\right)  $, $A^{\prime}$ is a $4$ points set, and
since $\dis\left(  R\right)  <\eta$, $\phi\left(  A^{\prime}\right)  <0$.
Hence $U\subset P$. It follows that $P$ is open.

Denote by $A_{\varepsilon}$ that the $4$ points space whose distance matrix is%
\[
\left(
\begin{array}
[c]{cccc}%
0 & 2\varepsilon & 2\varepsilon & \varepsilon\\
2\varepsilon & 0 & 2\varepsilon & \varepsilon\\
2\varepsilon & 2\varepsilon & 0 & \varepsilon\\
\varepsilon & \varepsilon & \varepsilon & 0
\end{array}
\right)  \text{.}%
\]
A direct computation shows that $\phi\left(  A_{\varepsilon}\right)
=-\frac{1}{9}\varepsilon^{6}$, and so, any space in which $A_{\varepsilon}$ is
isometrically embedded belongs to $P$. In order to show that $P$ is dense, it
is sufficient to prove that any finite metric space $F$ can be approached by
elements of $P$. Let $F=\left\{  x_{0},\dots,x_{n}\right\}  \in\mathfrak{M}%
_{F}$. For each $\varepsilon>0$ we endow $F_{\varepsilon}\overset{def}%
{=}\left\{  y_{1},y_{2},y_{3},x_{0},x_{1},\dots,x_{n}\right\}  $ with the
distance%
\begin{align*}
d^{F_{\varepsilon}}\left(  x_{i},x_{j}\right)   &  =d^{F}\left(  x_{i}%
,x_{j}\right)  \nph{$d^{F}\left( x_{i},x_{j}\right)$}\hspace{2.5cm}\left(
0\leq i,j\leq n\right) \\
d^{F_{\varepsilon}}\left(  x_{i},y_{j}\right)   &  =d^{F}\left(  x_{i}%
,x_{0}\right)  +\varepsilon
\nph{$d^{F}\left( x_{i},x_{0}\right)+\varepsilon$}\hspace{2.5cm}\left(  0\leq
i\leq n\text{, }1\leq j\leq3\right) \\
d^{F_{\varepsilon}}\left(  y_{i,}y_{j}\right)   &  =2\varepsilon
\nph{$2\varepsilon$}\hspace{2.5cm}\left(  1\leq i,j\leq3\text{, }i\neq
j\right)  \text{.}%
\end{align*}
It is easy to check that $d^{F_{\varepsilon}}$ is a distance on
$F_{\varepsilon}$. Moreover $A_{\varepsilon}$ is embedded (as $\left\{
y_{1},y_{2},y_{3},x_{0}\right\}  $) in $F_{\varepsilon}$, whence
$F_{\varepsilon}\in P$. At last $d_{GH}\left(  F,F_{\varepsilon}\right)
\leq\varepsilon$, whence $P$ is dense in $\mathfrak{M}$.
\end{proof}

\section{\label{SD}Dimensions}

We can associate to a compact metric space several (possibly coinciding)
numbers which all deserve to be called \emph{dimension}. Among the most used,
one distinguishes the\emph{\ topological dimension} ($\dim_{T}$), the
\emph{Hausdorff dimension} ($\dim_{H}$), the \emph{lower }($\dim_{B}$) or
\emph{upper} ($\dim^{B}$) \emph{box dimension}. It is a well-known fact that
for any compact metric space $X$%
\[
\dim_{T}\left(  X\right)  \leq\dim_{H}\left(  X\right)  \leq\dim_{B}\left(
X\right)  \leq\dim^{B}\left(  X\right)  \text{.}%
\]
We refer to \cite{Myjak} or \cite{Falconer} for more details on this subject.
For our purpose, we only need to recall that the upper and lower box (also
called \emph{box-counting}, \emph{fractal} \cite{Myjak}, \emph{entropy}
\cite{Grub}, \emph{capacity}, \emph{Kolmogorov}, \emph{Minkowski}, or
\emph{Minkowski-Bouligand}) dimensions are defined as%
\begin{align*}
\dim^{B}\left(  X\right)   &  =-\limsup_{\varepsilon\rightarrow0}\frac{\log
N\left(  X,\varepsilon\right)  }{\log\varepsilon}\\
\dim_{B}\left(  X\right)   &  =-\liminf_{\varepsilon\rightarrow0}\frac{\log
N\left(  X,\varepsilon\right)  }{\log\varepsilon}%
\end{align*}
(the notation $N\left(  X,\varepsilon\right)  $ is defined in Section
\ref{S2}). If we put%
\[
M\left(  X,\varepsilon\right)  =\max\left\{  \card\left(  F\right)  |F\subset
X~\cdm\left(  F\right)  \geq\varepsilon\right\}  \text{,}%
\]
we may replace $N$ by $M$ in the above formulas. This fact follows from the
inequalities%
\[
N\left(  X,\varepsilon\right)  \leq M\left(  X,\varepsilon\right)  \leq
N\left(  X,\varepsilon/3\right)  \text{,}%
\]
which in turn, follow with a little effort from the definitions of $M$ and $N$
\cite[p. 152]{Grub}.

The generic dimension of some compact subset of some fixed complete metric
space has been studied in \cite{Grub} by P. Gruber. He proved that, given a
complete metric space $X$ such that $\left\{  A\in\mathfrak{M}\left(
X\right)  \left\vert \dim_{B}A\geq\alpha\right.  \right\}  $ is dense in
$\mathfrak{M}\left(  X\right)  $, a generic element of $\mathfrak{M}\left(
X\right)  $ has zero lower box dimension, and an upper box dimension greater
than or equal to $\alpha$. In this section, we transpose his result in the
frame of the Gromov-Hausdorff space.

The result of Gruber is based on the following three lemmas

\begin{lemma}
\label{LG1}\emph{\cite[p. 153]{Grub} }Given a complete metric space $X$ and a
positive number $\varepsilon$, the functions $N\left(  \bullet,\varepsilon
\right)  :\mathfrak{M}\left(  X\right)  \rightarrow\mathbb{N}$ and $M\left(
\bullet,\varepsilon\right)  :\mathfrak{M}\left(  X\right)  \rightarrow
\mathbb{N}$ are respectively lower and upper semi-continuous.
\end{lemma}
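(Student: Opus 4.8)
The plan is to prove the two semicontinuity statements separately, working directly from the definitions of $N$ and $M$ together with the Hausdorff-distance structure on $\mathfrak{M}(X)$. The key observation in both cases is that if two compact sets $A,B\in\mathfrak{M}(X)$ satisfy $d_H^X(A,B)<\delta$, then $A\subset B+\delta$ and $B\subset A+\delta$, so any covering or separated configuration in one set transfers approximately to the other. The whole proof amounts to quantifying this transfer and checking the direction of the resulting inequality.

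For lower semicontinuity of $N(\bullet,\varepsilon)$, I would fix $A\in\mathfrak{M}(X)$ and show that $N(\bullet,\varepsilon)\geq N(A,\varepsilon)$ holds on a neighborhood of $A$. Here the cleanest route is to prove the reverse-type estimate: if $d_H^X(A,B)<\delta$, then any covering of $B$ by closed $\varepsilon$-balls can be converted into a covering of $A$ by closed $(\varepsilon+\delta)$-balls, giving $N(A,\varepsilon+\delta)\leq N(B,\varepsilon)$. Since $N(A,\bullet)$ is left-continuous and non-increasing (a fact recorded in Section \ref{S2}), letting $\delta\to 0$ yields $N(A,\varepsilon)\leq\liminf_{B\to A}N(B,\varepsilon)$, which is exactly lower semicontinuity. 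The only care needed is to take $\delta$ small and to handle the endpoint using left-continuity of $N(A,\bullet)$ rather than trying to pass a strict inequality to the limit directly.

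For upper semicontinuity of $M(\bullet,\varepsilon)$, I would fix $A$ and show $M(\bullet,\varepsilon)\leq M(A,\varepsilon)$ near $A$. Suppose $B$ is close to $A$ and let $F\subset B$ be a maximal $\varepsilon$-separated set realizing $M(B,\varepsilon)$, so $\cdm(F)\geq\varepsilon$ and $\card(F)=M(B,\varepsilon)$. Using $B\subset A+\delta$, I replace each point of $F$ by a nearby point of $A$ within distance $\delta$; the resulting set $F'\subset A$ has cardinality $\card(F)$ (the points stay distinct provided $\delta$ is small compared to $\varepsilon$) and codiameter at least $\varepsilon-2\delta$. Hence $M(A,\varepsilon-2\delta)\geq M(B,\varepsilon)$, and since $M(A,\bullet)$ is non-increasing and integer-valued, for $\delta$ small enough $M(A,\varepsilon-2\delta)$ can be made equal to a bounded quantity controlled by $M(A,\cdot)$; the correct conclusion is $M(B,\varepsilon)\leq M(A,\varepsilon-2\delta)$, and letting $B\to A$ gives $\limsup_{B\to A}M(B,\varepsilon)\leq M(A,\varepsilon)$ once the monotonicity in the second variable is used to absorb the $2\delta$.

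The main obstacle, and the place deserving genuine attention, is the asymmetry in the two arguments: one must be careful that the $\pm\delta$ perturbations push in the direction that \emph{helps} rather than hurts, and that the passage to the limit exploits the right one-sided continuity of $N(A,\bullet)$ or monotonicity of $M(A,\bullet)$ in $\varepsilon$. In particular, for the separated-set argument one must verify that mapping $F$ into $A$ does not collapse two points—this is where the hypothesis that $\delta$ is small relative to $\varepsilon$ (specifically $2\delta<\varepsilon$) is essential, and it is the single step where a naive estimate could silently reduce the cardinality and break the bound.
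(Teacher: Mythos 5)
The paper offers no proof of Lemma \ref{LG1} at all --- it is quoted from Gruber's paper --- so your argument must stand entirely on its own. Your two transfer inequalities are essentially correct (with the paper's definition, where the covering set $F$ must lie inside the space being covered, the first one comes out as $N(A,\varepsilon+2\delta)\le N(B,\varepsilon)$ rather than $\varepsilon+\delta$, but that is harmless). The genuine gap sits in both limit passages, i.e.\ precisely at the step you yourself single out as the crux, and in both halves you invoke the wrong property. For $N$: from $N(A,\varepsilon+2\delta)\le N(B,\varepsilon)$ you must let $\delta\downarrow 0$ in $N(A,\varepsilon+2\delta)$, a limit from the \emph{right} in the second variable; left-continuity (the property you cite, echoing the remark in Section \ref{S2}) is irrelevant to this limit, and is in fact false for the closed-ball definition used here: if $A$ consists of two points at mutual distance $1$, then $N(A,\eta)=2$ for all $\eta<1$ while $N(A,1)=1$. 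What you need is right-continuity of $N(A,\bullet)$, equivalently lower semicontinuity in $\varepsilon$ --- the property the paper itself actually uses in its completeness proof; the ``left-continuous'' remark in Section \ref{S2} is a slip. For $M$: from $M(B,\varepsilon)\le M(A,\varepsilon-2\delta)$ you need $\lim_{\delta\downarrow 0}M(A,\varepsilon-2\delta)=M(A,\varepsilon)$, i.e.\ \emph{left}-continuity of $M(A,\bullet)$. Monotonicity cannot ``absorb the $2\delta$'': it runs the wrong way, since $M(A,\varepsilon-2\delta)\ge M(A,\varepsilon)$, so your inequality only bounds $M(B,\varepsilon)$ by a quantity that may a priori exceed $M(A,\varepsilon)$.

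Both missing facts are true, and follow from compactness of $A$: the extremal quantities $m_n=\min\left\{\max_{x\in A}d\left(x,F\right)\,:\,F\subset A,\ \card\left(F\right)\le n\right\}$ and $s_n=\max\left\{\cdm\left(F\right)\,:\,F\subset A,\ \card\left(F\right)=n\right\}$ are attained, whence $N(A,\varepsilon)\le n\Longleftrightarrow m_n\le\varepsilon$ and $M(A,\varepsilon)\ge n\Longleftrightarrow s_n\ge\varepsilon$; these equivalences give right-continuity of $N(A,\bullet)$ and left-continuity of $M(A,\bullet)$ at every $\varepsilon>0$, and with them your argument closes. Alternatively, you can bypass continuity in $\varepsilon$ altogether: if $A_k\to A$ in $d_H^X$, the union $A\cup\bigcup_k A_k$ is compact, so the extremal finite configurations $F_k\subset A_k$ (coverings for $N$, separated sets for $M$) admit convergent subsequences of tuples, and the closed conditions $d(x,F)\le\varepsilon$ and $\cdm(F)\ge\varepsilon$ pass to the limit. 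This sequential route is cleaner, and it makes visible that the closed inequalities in the definitions of $N$ and $M$ are exactly what makes the lemma true; note that your perturbation argument, as written, never uses them, which is a sign that something had to be missing.
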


\begin{lemma}
\label{LG2}\emph{\cite[p. 20]{Grub2} }Let $B$ be a Baire space. Let
$\alpha_{1}$,$\alpha_{2}$,\dots be positive real constants and $\phi_{1}$,
$\phi_{2}$, \dots be non negative upper-continuous real functions on $B$ such
that $\left\{  x\in B|\phi_{n}\left(  x\right)  =o\left(  \alpha_{n}\right)
\right\}  $ is dense in $B$. Then, for a generic point of $B$, the inequality
$\phi_{n}\left(  x\right)  <\alpha_{n}$ holds for infinitely many $n$.
\end{lemma}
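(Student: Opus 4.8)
The plan is to realize the target set as a countable intersection of open dense subsets of $B$, so that residuality follows at once from the Baire property.

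First I would record the consequence of upper semi-continuity. For each $n$ the set $U_n = \left\{ x \in B \mid \phi_n\left(x\right) < \alpha_n \right\}$ is open, since $\phi_n$ is upper semi-continuous and $\alpha_n$ is a constant. The set of points $x$ for which $\phi_n\left(x\right) < \alpha_n$ holds for infinitely many $n$ is then exactly $G = \bigcap_{N \in \mathbb{N}} V_N$, where $V_N = \bigcup_{n \geq N} U_n$. Each $V_N$ is open as a union of open sets, so $G$ is a $G_\delta$ subset of $B$.

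Next I would prove that every $V_N$ is dense, which is where the hypothesis is used. Let $D = \left\{ x \in B \mid \phi_n\left(x\right) = o\left(\alpha_n\right) \right\}$ be the given dense set. If $x \in D$, then $\phi_n\left(x\right) / \alpha_n \to 0$ as $n \to \infty$, so $\phi_n\left(x\right) < \alpha_n$ for all sufficiently large $n$; in particular, for every $N$ there is some $n \geq N$ with $x \in U_n$, i.e.\ $x \in V_N$. Hence $D \subseteq V_N$ for each $N$, and since $D$ is dense, so is the open set $V_N$.

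Finally, $G$ is a countable intersection of open dense subsets of the Baire space $B$, and is therefore residual, which is precisely the assertion that $\phi_n\left(x\right) < \alpha_n$ holds for infinitely many $n$ at a generic point. The argument is short; the only point demanding care is the correct reading of the little-$o$ hypothesis, namely as $\phi_n\left(x\right)/\alpha_n \to 0$ as $n \to \infty$, together with the observation that this asymptotic decay forces membership in \emph{every} tail union $V_N$. It is exactly that observation which upgrades the mere density of $D$ to the density of each open set $V_N$, so that no genuine obstacle remains beyond assembling these steps in the right order.
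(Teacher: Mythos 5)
Your proof is correct. Note that the paper itself offers no proof of this lemma --- it is quoted verbatim from Gruber \cite[p. 20]{Grub2} --- so there is nothing internal to compare against; your argument (the sets $U_n=\{x\mid\phi_n(x)<\alpha_n\}$ are open by upper semi-continuity, the tail unions $V_N=\bigcup_{n\geq N}U_n$ are open and contain the dense set where $\phi_n(x)=o(\alpha_n)$, hence $\bigcap_N V_N$ is residual) is the standard Baire-category argument and is exactly the kind of reasoning the citation stands in for. The one step worth stating explicitly, which you handle correctly, is that the little-$o$ hypothesis with $\alpha_n>0$ forces $\phi_n(x)<\alpha_n$ for all sufficiently large $n$, so the dense set lands inside \emph{every} tail union $V_N$, not merely in their union.
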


\begin{lemma}
\label{LG3}\emph{\cite[p. 20]{Grub2} }Let $B$ be a Baire space. Let $\beta
_{1}$,$\beta_{2}$,\dots be non-negative real constants and $\psi_{1}$,
$\psi_{2}$, \dots be positive lower-continuous real functions on $B$ such that
$\left\{  x\in B|\beta_{n}=o\left(  \psi_{n}\left(  x\right)  \right)
\right\}  $ is dense in $B$. Then, for a generic point of $B$, the inequality
$\beta_{n}<\psi_{n}\left(  x\right)  $ holds for infinitely many $n$.
\end{lemma}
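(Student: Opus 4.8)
The plan is to realize the desired generic property as a dense $G_{\delta}$ set. For each $n$ put $U_{n}=\left\{x\in B\mid\psi_{n}(x)>\beta_{n}\right\}$. The assertion ``$\beta_{n}<\psi_{n}(x)$ holds for infinitely many $n$'' is exactly the statement that $x$ belongs to $G\overset{def}{=}\bigcap_{N\in\mathbb{N}}V_{N}$, where $V_{N}=\bigcup_{n\geq N}U_{n}$ (so $G$ is the set-theoretic $\limsup$ of the $U_{n}$). Thus it suffices to show that each $V_{N}$ is open and dense: if so, then $B\setminus V_{N}$ is closed with empty interior, \ie rare, whence $B\setminus G=\bigcup_{N}(B\setminus V_{N})$ is meager and $G$ is residual, which is the claim.

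Openness comes for free from the hypotheses. By definition a lower-semicontinuous function has open superlevel sets, so each $U_{n}=\left\{\psi_{n}>\beta_{n}\right\}$ is open, and therefore so is the union $V_{N}$. This is precisely where the lower-semicontinuity assumption on the $\psi_{n}$ enters, and its role is dual to that of the upper-semicontinuity assumption in Lemma \ref{LG2}.

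For density I would use the asymptotic hypothesis. Let $D=\left\{x\in B\mid\beta_{n}=o(\psi_{n}(x))\right\}$, which is dense by assumption. Fix $x\in D$ and $N\in\mathbb{N}$. Since $\psi_{n}(x)>0$ for every $n$ and $\beta_{n}/\psi_{n}(x)\to 0$, we have $\beta_{n}<\psi_{n}(x)$ for all sufficiently large $n$; in particular there is some $n\geq N$ with $x\in U_{n}\subset V_{N}$. Hence $D\subset V_{N}$, so $V_{N}$ is dense. Intersecting over $N$ gives that $G$ is a dense $G_{\delta}$, hence residual, and the proof is complete.

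The only point requiring a little care -- and the closest thing to an obstacle in this otherwise soft argument -- is converting the little-$o$ condition into the \emph{strict} inequality $\beta_{n}<\psi_{n}(x)$: the definition of $o(\cdot)$ yields only $\beta_{n}\leq\varepsilon\,\psi_{n}(x)$ eventually, so one must take $\varepsilon<1$ and crucially invoke the strict positivity of $\psi_{n}(x)$ to conclude $\beta_{n}<\psi_{n}(x)$. Everything else (the $\limsup$ reformulation, openness from semicontinuity, and the final Baire-category bookkeeping) is routine.
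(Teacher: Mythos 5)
Your proof is correct and complete: lower semicontinuity makes each superlevel set $U_{n}=\{\psi_{n}>\beta_{n}\}$ open, the tail unions $V_{N}=\bigcup_{n\geq N}U_{n}$ are then open and contain the dense set on which $\beta_{n}=o\left(\psi_{n}\left(x\right)\right)$ (where the strict inequality for large $n$ does indeed require taking $\varepsilon<1$ and using $\psi_{n}\left(x\right)>0$, as you note), so the $\limsup$ set $\bigcap_{N}V_{N}$ is a dense $G_{\delta}$, hence residual. Note that the paper itself offers no proof of this lemma --- it is imported verbatim from Gruber \cite[p.~20]{Grub2} --- and your dense-$G_{\delta}$ argument is the standard one for results of this type, so there is nothing to reconcile.
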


We first transfer Lemma \ref{LG1} in the Gromov-Hausdorff framework.

\begin{lemma}
\label{L1}Given a positive number $\varepsilon$, the functions $N\left(
\bullet,\varepsilon\right)  :\mathfrak{M}\rightarrow\mathbb{N}$ and $M\left(
\bullet,\varepsilon\right)  :\mathfrak{M}\rightarrow\mathbb{N}$ are
respectively lower and upper semi-continuous.
\end{lemma}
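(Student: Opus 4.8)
The plan is to transfer Lemma \ref{LG1} from the setting of a fixed ambient space to the abstract Gromov-Hausdorff space by exploiting the quantitative comparison between covering numbers of nearby spaces. The key observation is that the claim established inside the proof of the completeness theorem, namely
\[
N\left(  X,\rho+2d_{GH}\left(  X,Y\right)  \right)  \leq N\left(  Y,\rho\right)
\text{,}
\]
already encodes exactly the monotonicity in the Gromov-Hausdorff distance that lower semi-continuity of $N\left(\bullet,\varepsilon\right)$ requires. So first I would fix $\varepsilon>0$ and a space $Y\in\mathfrak{M}$, and recall that $N\left(Y,\bullet\right)$ is non-increasing and left-continuous, so for any integer $k<N\left(Y,\varepsilon\right)$ there is a $\delta>0$ with $N\left(Y,\varepsilon-\delta\right)>k$ as well; more simply, $N\left(Y,\varepsilon-\delta\right)\geq N\left(Y,\varepsilon\right)$ for all small $\delta$.

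Next I would apply the claimed inequality with the roles reversed. For any $X$ with $d_{GH}\left(X,Y\right)<\delta/2$, taking $\rho=\varepsilon-\delta$ gives
\[
N\left(  X,\varepsilon\right)  \geq N\left(  X,\varepsilon-\delta+2d_{GH}\left(  X,Y\right)  \right)  \geq N\left(  Y,\varepsilon-\delta\right)  \geq N\left(  Y,\varepsilon\right)  \text{,}
\]
where the first inequality is monotonicity of $N\left(X,\bullet\right)$ (since $\varepsilon-\delta+2d_{GH}\left(X,Y\right)<\varepsilon$) and the last uses left-continuity of $N\left(Y,\bullet\right)$. Since $N$ takes integer values, this shows $N\left(X,\varepsilon\right)\geq N\left(Y,\varepsilon\right)$ for all $X$ in a small enough ball around $Y$, which is precisely lower semi-continuity at $Y$.

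For the function $M$, I would argue directly from its definition via correspondences rather than through a covering inequality. Suppose $M\left(Y,\varepsilon\right)=k$, witnessed by points $y_{1},\dots,y_{k}\in Y$ with pairwise distances at least $\varepsilon$. Given $X$ with $d_{GH}\left(X,Y\right)<\delta/2$, choose a correspondence $R$ between $X$ and $Y$ with $\dis\left(R\right)<\delta$, pick $x_{i}\in X$ with $x_{i}Ry_{i}$, and observe that the pairwise distances among the $x_{i}$ exceed $\varepsilon-\delta$. This shows $M\left(X,\varepsilon-\delta\right)\geq k$; choosing $\delta$ so that $M\left(Y,\varepsilon\right)$ is not yet exceeded as $\varepsilon$ is decreased by $\delta$ then yields the upper semi-continuity $M\left(X,\varepsilon\right)\leq M\left(Y,\varepsilon\right)$ for $X$ near $Y$, after interchanging the roles of $X$ and $Y$. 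The main technical point, and the only place requiring care, is the handling of the left-continuity and the strict-versus-weak inequalities in the distance thresholds: one must ensure that the small perturbation $\delta$ coming from the Gromov-Hausdorff distance does not push the separated configuration below the threshold $\varepsilon$, which is exactly why the $\varepsilon/3$ slack in the inequality $N\left(X,\varepsilon\right)\leq M\left(X,\varepsilon\right)\leq N\left(X,\varepsilon/3\right)$ and the semi-continuity directions (lower for $N$, upper for $M$) must be tracked separately.
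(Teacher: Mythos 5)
Your overall strategy (a direct quantitative transfer based on the covering-number inequality from the completeness proof, instead of the paper's route of placing all spaces in one compact ambient space via Lemma \ref{LPS} and invoking Gruber's Lemma \ref{LG1}) is viable, but the chain of inequalities you wrote for $N$ is false. First, since $N(X,\bullet)$ is non-increasing and $\varepsilon-\delta+2d_{GH}(X,Y)<\varepsilon$, monotonicity gives $N(X,\varepsilon-\delta+2d_{GH}(X,Y))\geq N(X,\varepsilon)$, which is the reverse of your first inequality. Second, the paper's claim reads $N(X,\rho+2d_{GH}(X,Y))\leq N(Y,\rho)$; with the roles reversed the enlarged radius must sit on the $Y$ side, namely $N(Y,\rho+2d_{GH}(X,Y))\leq N(X,\rho)$, so your middle inequality $N(X,\varepsilon-\delta+2d_{GH}(X,Y))\geq N(Y,\varepsilon-\delta)$ is again the reverse of what the claim yields. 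Worse, the composite statement your chain would establish, $N(X,\varepsilon)\geq N(Y,\varepsilon-\delta)$ for all $X$ near $Y$, is simply wrong: take $X=Y=\{0,1\}$ and $\varepsilon=1$; then $N(X,\varepsilon)=1$ while $N(Y,\varepsilon-\delta)=2$.

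The repair is different from what you indicate, and it shows that your appeal to left-continuity is misplaced. Apply the reversed claim at $\rho=\varepsilon$ to get $N(X,\varepsilon)\geq N(Y,\varepsilon+2d_{GH}(X,Y))$; here the perturbation pushes the radius \emph{upward}, so what you need is that the integer-valued, non-increasing function $N(Y,\bullet)$ is constant on some interval $[\varepsilon,\varepsilon+\delta_{0}]$, i.e. right-continuity at $\varepsilon$ (equivalently lower semicontinuity in the radius, which is the property the paper actually uses inside the completeness proof, and which holds for the closed-ball definition by a compactness argument), not left-continuity or monotonicity below $\varepsilon$. With that, $N(X,\varepsilon)\geq N(Y,\varepsilon)$ whenever $2d_{GH}(X,Y)\leq\delta_{0}$, which is lower semicontinuity. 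Your argument for $M$ is sound in outline once the roles are interchanged as you say (a correspondence of distortion less than $\delta$ turns an $\varepsilon$-separated set in $X$ into an $(\varepsilon-\delta)$-separated set in $Y$, giving $M(X,\varepsilon)\leq M(Y,\varepsilon-\delta)$), but it too rests on an unproved continuity fact: you need $M(Y,\varepsilon-\delta)=M(Y,\varepsilon)$ for small $\delta$, i.e. left-continuity of $M(Y,\bullet)$ at $\varepsilon$, which is true (again by compactness, using that the separation constraint is a weak inequality) but is stated nowhere in the paper and is not justified in your proposal. The paper's own proof avoids all of these radius-perturbation issues: it uses Lemma \ref{LPS} to realize the convergent sequence and its limit inside a single compact space $Z$ with $d_{H}^{Z}(X_{n},X)\rightarrow0$, and then quotes Lemma \ref{LG1} verbatim.
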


\begin{proof}
Let $\left(  X_{n}\right)  _{n\in\mathbb{N}}$ a sequence of compact metric
spaces converging to $X$ with respect to $d_{GH}$. By Lemma \ref{LPS}, we can
assume without loss of generality that all $X_{n}$ and $X$ are subsets of some
compact metric space $Z$, such that $d_{H}^{Z}\left(  X_{n},X\right)  $ tends
to zero. Hence, we can apply Lemma \ref{LG1} in $Z$, whence $M\left(
X,\varepsilon\right)  \geq\limsup M\left(  X_{n},\varepsilon\right)  $ and
$N\left(  X,\varepsilon\right)  \leq\liminf N\left(  X_{n},\varepsilon\right)
$.
\end{proof}

\begin{theorem}
\label{T5}The lower box dimension of a generic compact metric space is zero,
while its upper box dimension is infinite.
\end{theorem}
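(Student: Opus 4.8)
The plan is to deduce both halves from the two abstract Baire-category lemmas of Gruber (Lemmas \ref{LG2} and \ref{LG3}), applied to the complete---hence Baire---space $\mathfrak{M}$. The box dimensions may be computed from either $N(X,\bullet)$ or $M(X,\bullet)$, and Lemma \ref{L1} records that $N(\bullet,\varepsilon)$ is lower semicontinuous while $M(\bullet,\varepsilon)$ is upper semicontinuous; this is what decides which function to feed into which lemma. I would fix once and for all a geometric sequence of scales, say $\varepsilon_{n}=2^{-n}$, so that $\log(1/\varepsilon_{n})=n\log 2$, and express both conclusions through the quotient $\log N(X,\varepsilon)/\log(1/\varepsilon)$, whose lower limit is the lower box dimension and whose upper limit is the upper box dimension.

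For the lower box dimension, the aim is to make, for a generic $X$, the quotient $\log N(X,\varepsilon)/\log(1/\varepsilon)$ arbitrarily close to $0$ along some sequence tending to $0$; equivalently, that very few balls suffice at infinitely many small scales. This is an \emph{upper} estimate holding infinitely often, which is exactly the shape of Lemma \ref{LG2}. I would take $\phi_{n}=M(\bullet,\varepsilon_{n})$, upper semicontinuous by Lemma \ref{L1}, and constants $\alpha_{n}\to\infty$ growing slowly, say $\alpha_{n}=n$, so that $\log\alpha_{n}=o\!\left(\log(1/\varepsilon_{n})\right)$. The density hypothesis of Lemma \ref{LG2} is immediate: any finite space $F$ has $M(F,\varepsilon_{n})\leq\card(F)$, a bounded quantity, hence $o(\alpha_{n})$, and finite spaces are dense in $\mathfrak{M}$. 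The lemma then gives a generic $X$ with $M(X,\varepsilon_{n})<n$ for infinitely many $n$, whence $\log M(X,\varepsilon_{n})/\log(1/\varepsilon_{n})<\log n/(n\log 2)\to 0$ along that subsequence. As this quotient is always nonnegative, its lower limit is exactly $0$, so the lower box dimension (and a fortiori the Hausdorff dimension) vanishes.

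For the upper box dimension I would do the dual thing: force $\log N(X,\varepsilon)/\log(1/\varepsilon)$ to tend to $+\infty$ along some sequence tending to $0$, i.e. that enormously many balls are required at infinitely many small scales. This is a \emph{lower} estimate on the covering numbers holding infinitely often, matching Lemma \ref{LG3}. Here I take $\psi_{n}=N(\bullet,\varepsilon_{n})$, lower semicontinuous by Lemma \ref{L1}, and $\beta_{n}=\varepsilon_{n}^{-n}$, so that $\log\beta_{n}/\log(1/\varepsilon_{n})=n\to\infty$. Granting the lemma applies, it produces a generic $X$ with $N(X,\varepsilon_{n})>\beta_{n}$ for infinitely many $n$, and then $\log N(X,\varepsilon_{n})/\log(1/\varepsilon_{n})>n$ infinitely often, so the upper box dimension is infinite.

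The main obstacle is the density hypothesis of Lemma \ref{LG3}: that the spaces $X$ with $\beta_{n}=o\!\left(N(X,\varepsilon_{n})\right)$ are dense. Since finite spaces are dense, it suffices to approximate an arbitrary finite $F$, within a prescribed $\rho>0$, by a compact space whose covering numbers at the scales $\varepsilon_{n}$ eventually dominate any preassigned growth. I would construct such a space by grafting onto a single point $x_{0}\in F$ a tower of finite ``equilateral clusters'': for each $k$ a set of $m_{k}$ points mutually at distance $\delta_{k}$, with $\delta_{k}\downarrow 0$ and $\delta_{1}\leq\rho$, the clusters accumulating only at $x_{0}$, and the distances between distinct clusters set ultrametrically to $\max(\delta_{k},\delta_{l})$, so that both the triangle inequality and compactness are routine. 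Since the whole tower lies in the ball $B(x_{0},\rho)$, one gets $d_{GH}(X,F)\leq\rho$. Taking $\delta_{k}=\varepsilon_{k}$ for $k$ large enough that $\varepsilon_{k}<\rho$, a ball of radius $\varepsilon_{n}$ cannot merge two points of the cluster at any scale $\delta_{k}>\varepsilon_{n}$; choosing $k=n-1$ forces $N(X,\varepsilon_{n})\geq m_{n-1}$, so that $m_{k}\geq k\,\beta_{k+1}$ yields $N(X,\varepsilon_{n})/\beta_{n}\geq n-1\to\infty$, which is precisely the required domination. Pinning down the rates $\delta_{k},m_{k}$ and verifying this lower bound on $N$ is the one genuinely technical step; once it is in place, the theorem follows by direct invocation of Lemmas \ref{LG2} and \ref{LG3}.
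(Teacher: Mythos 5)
Your proposal is correct, and it rests on the same machinery as the paper's proof: the completeness of $\mathfrak{M}$, the semicontinuity statements of Lemma \ref{L1}, Gruber's Lemmas \ref{LG2} and \ref{LG3}, and density arguments built from finite spaces. But the way you discharge the hypotheses is genuinely different, most visibly in the upper-box-dimension half. The paper applies Lemma \ref{LG3} once for \emph{each} integer $D$, with the polynomial rates $\beta_{n}=n^{D/3}$; the density hypothesis is then satisfied by the finite-dimensional spaces $F\times B$ ($B$ a $D$-dimensional ball glued pointwise onto $F$), and the conclusion $\dim^{B}X\geq D/3$ for generic $X$ is upgraded to $\dim^{B}X=\infty$ by a countable union of the meager sets $Q_{D}$. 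You instead make a \emph{single} application with the super-polynomial rates $\beta_{n}=\varepsilon_{n}^{-n}$, which forces a stronger density statement: you need approximants whose covering numbers grow faster than any polynomial in $1/\varepsilon_{n}$, and the paper's $F\times B$ spaces (having finite dimension $D$) cannot supply this. Your ultrametric tower of equilateral clusters does supply it, and the key counting step is sound -- note that it is sound \emph{because} the tower is ultrametric: a ball of radius $\varepsilon_{n}$ in an ultrametric set has diameter at most $\varepsilon_{n}$, so it cannot contain two points of the cluster at scale $\delta_{n-1}=2\varepsilon_{n}$, whereas in a general metric space two points at distance exactly $2\varepsilon_{n}$ can share a covering ball, and your choice $k=n-1$ would be borderline. (One should also keep $\rho<\cdm(F)$ so that points of $F\setminus\{x_{0}\}$ cannot serve as centers covering tower points; this is the kind of detail your sketch defers but which causes no difficulty.) The trade-off: the paper's density step is a one-line appeal to classical dimension theory but needs the final countable-union bookkeeping over $D$ (and over $\tau=1/k$ in the lower-box half, where you likewise use a single application with $\alpha_{n}=n$); your route needs an explicit hand-built construction but concludes in one shot and never invokes the notion of dimension of $\mathbb{R}^{D}$ at all.
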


\begin{proof}
The proof follows rather closely Gruber's one. Let $\tau>0$. Since
$\mathfrak{M}_{F}$ in included in%
\[
\left\{  X\in\mathfrak{M}\left\vert M\left(  X,\frac{1}{n}\right)  =o\left(
n^{\tau}\right)  \right.  \right\}  \text{,}%
\]
this set is dense in $\mathfrak{M}$. Applying Lemma \ref{LG2}, we obtain that
for a generic $X\in\mathfrak{M}$, the inequality $M\left(  X,\frac{1}%
{n}\right)  <n^{\tau}$ holds for infinitely many $n$, whence
\[
\dim_{B}X\leq\liminf_{n\rightarrow\infty}-\frac{\log M\left(  X,\frac{1}%
{n}\right)  }{\log\frac{1}{n}}\leq\tau\text{.}%
\]
In other words, the set $P_{\tau}\overset{def}{=}\left\{  X\in\mathfrak{M}%
|\dim_{B}X>\tau\right\}  $ is meager, and thus the set
\[
\left\{  X\in M|\dim_{B}X>0\right\}  =\bigcup_{k\in\mathbb{N}}P_{1/k}%
\]
is meager too.

Let $D$ be a positive integer. We claim that set of $D$-dimensional (for any
of the aforementioned notion of dimension) compact set is dense in
$\mathfrak{M}$. Indeed, it is sufficient to prove that any finite metric space
can be approximated by a $D$-dimensional one. Let $F$ be a finite metric
space, let $B\subset\mathbb{R}^{D}$ be a $D$-dimensional ball centered at $0$
whose radius $\varepsilon$ is less than $\cdm\left(  F\right)  $, and endow
the product $F\times B$ with the distance%
\begin{align*}
d^{F\times B}\left(  \left(  a,u\right)  ,\left(  a^{\prime},u^{\prime
}\right)  \right)   &  =d^{F}\left(  a,a^{\prime}\right)  +\left\Vert
u\right\Vert +\left\Vert u^{\prime}\right\Vert \text{, if }a\neq a^{\prime}\\
d^{F\times B}\left(  \left(  a,u\right)  ,\left(  a,u^{\prime}\right)
\right)   &  =\left\Vert u-u^{\prime}\right\Vert \text{.}%
\end{align*}
Its easy to see that $d^{F\times B}$ is actually a distance, and that
$d_{GH}\left(  F,F\times B\right)  \leq d_{H}^{F\times B}\left(
F\times\left\{  0\right\}  ,F\times B\right)  =\varepsilon$. Moreover, as a
disjoint union of $D$-dimensional balls, $F\times B$ is $D$-dimensional. This
proves the claim.

If $\dim_{B}X=D$, then for $n$ large enough $n^{\frac{2D}{3}}<N\left(
X,\frac{1}{n}\right)  $, whence $n^{\frac{D}{3}}=o\left(  n^{\frac{2D}{3}%
}\right)  =o\left(  N\left(  X,\frac{1}{n}\right)  \right)  $. It follows that
the set of $D$-dimensional compact metric spaces is included in%
\[
\left\{  X\in\mathfrak{M}\left\vert n^{\frac{D}{3}}=o\left(  N\left(
X,\frac{1}{n}\right)  \right)  \right.  \right\}  \text{,}%
\]
which is thereby dense in $\mathfrak{M}$. Applying Lemma \ref{LG3} we obtain
that, for a generic metric space $X$, the inequality $N\left(  X,\frac{1}%
{n}\right)  >n^{D/3}$ holds for infinitely many $n$. Hence $\dim^{B}%
X\geq\limsup_{n\rightarrow\infty}-\frac{\log N\left(  X,\frac{1}{n}\right)
}{\log\frac{1}{n}}\geq\frac{D}{3}$. In other words the set $Q_{D}\overset
{def}{=}\left\{  X\in M|\dim^{B}X<\frac{D}{3}\right\}  $ is meager, and thus
the set
\[
\left\{  X\in M|\dim^{B}X<\infty\right\}  =\bigcup_{D\in\mathbb{N}}Q_{D}%
\]
is meager too.
\end{proof}

\bibliographystyle{amsplain}
\bibliography{cg}

\end{document}